\newtheorem{theorem}{Theorem}[section]
\newtheorem{proposition}[theorem]{Proposition}
\newtheorem{lemma}[theorem]{Lemma}
\theoremstyle{definition}
\newtheorem{definition}[theorem]{Definition}
\newtheorem{example}[theorem]{Example}
\theoremstyle{remark}
\newtheorem{remark}[theorem]{Remark}
\newtheorem*{remark*}{Remark}
\numberwithin{equation}{section}
\newcommand{\cV}{{\mathcal V}}
\newcommand{\cH}{{\mathcal H}}
\newcommand{\cE}{{\mathcal E}}
\newcommand{\G}{\mathit{G}}
\newcommand{\C}{{\mathbb C}}
\newcommand{\bR}{{\mathbb R}}
\newcommand{\bZ}{{\mathbb Z}}
\newcommand{\lie}[1]{\mathfrak{#1}}
\newcommand{\mfe}{\lie{e}}
\newcommand{\mfg}{\lie{g}}
\newcommand{\mfh}{\lie{h}}
\newcommand{\mfp}{\lie{p}}
\newcommand{\mfu}{\lie{u}}
\newcommand{\sL}{\lie{sl}}
\newcommand{\su}{\lie{su}}
\newcommand{\spa}[1]{\mathrm{span}(#1)}
\newcommand{\vol}{\mathrm{vol}}
\newcommand{\mc}{\mathcal}
\newcommand{\mf}{\mathfrak}
\newcommand{\pdiff}[2]{\frac{\partial #1}{\partial #2}}
\DeclareMathOperator{\End}{End}
\DeclareMathOperator{\ad}{ad}
\DeclareMathOperator{\diag}{diag}
\DeclareMathOperator{\id}{id}
\DeclareMathOperator{\tr}{tr}
\DeclareMathOperator{\SU}{SU}
\setlist{nosep}
\newcommand{\R}{\mathbb{R}}
\newcommand{\GL}{\mathrm{GL}}
\newcommand{\SL}{\mathrm{SL}}
\newcommand{\SO}{\mathrm{SO}}
\newcommand{\so}{\mathfrak{so}}
\newcommand{\spin}{\mathfrak{spin}}
\newcommand{\Spin}{\mathrm{Spin}}
\newcommand{\Sp}{\mathrm{Sp}}
\newcommand{\Sym}{\operatorname{Sym}}
\newcommand{\divgc}{\operatorname{div}}
\newcommand{\grad}{\operatorname{grad}}
\newcommand{\Diff}{\operatorname{Diff}}
\renewcommand{\Re}{\mathrm{Re}}
\renewcommand{\Im}{\mathrm{Im}}
\begin{document}

\title{Homogeneous spinor flow}
\author{Marco Freibert}
\address{Mathematisches Seminar der Universit\"at Kiel, Ludewig-Meyn-Stra{\ss}e 4, D--24098 Kiel, Germany}
\email{freibert@math.uni-kiel.de}
\address{Department of Mathematics, King’s College London, Strand, London WC2R 2LS, United Kingdom}
\email{marco.freibert@kcl.ac.uk}
\author{Lothar Schiemanowski}
\address{Mathematisches Seminar der Universit\"at Kiel\\ Ludewig-Meyn-Stra{\ss}e 4\\ D--24098 Kiel\\ Germany}
\email{schiemanowski@math.uni-kiel.de}
\author{Hartmut Wei\ss}
\address{Mathematisches Seminar der Universit\"at Kiel\\ Ludewig-Meyn-Stra{\ss}e 4\\ D--24098 Kiel\\ Germany}
\email{weiss@math.uni-kiel.de}

\begin{abstract}
We study the spinor flow on homogeneous spin manifolds. After providing the general setup we discuss the homogeneous spinor flow in dimension 3 and on almost abelian Lie groups in detail. As a further example the flag manifold in dimension 6 is treated.
\end{abstract}

\maketitle

\section{Introduction}
The spinor flow is a geometric evolution equation for a pair consisting of a Riemannian metric $g$ and a unit spinor $\varphi$ on a spin manifold $M$. Here a unit spinor is a section of unit length of the complex spinor bundle $\Sigma_gM$ determined by the spin structure and the Riemannian metric $g$. It is the negative gradient flow of the spinorial energy functional
\[
\mathcal{E}(g,\varphi) = \frac 12 \int_M | \nabla^g \varphi|^2 \, \vol_g
\]
on the set
$$\mathcal{N} = \{(g, \varphi): g \in \Gamma(\odot^2_+ T^*M), \varphi \in \Gamma(\Sigma_g M), |\varphi|=1\}.$$
The critical points of this flow in dimension $3$ and up are absolute minimizers, i.e.\@ pairs of Riemannian metrics $g$ and unit spinor fields $\varphi$ satisfying $\nabla^g \varphi \equiv 0$. This implies that $g$ is a Ricci flat metric of special holonomy. Moreover,
pairs of a Riemannian metric and a unit Killing spinor field are particular cases of volume constrained critical points of the spinorial energy. Thus, the spinor flow is a natural tool to study the geometry of special holonomy spaces and the related weak holonomy spaces.

Short-time existence of the spinor flow on a compact spin manifold has been established in \cite{aww1} and first steps towards understanding the long-time behaviour have been taken in \cite{schie1}, \cite{schie2}. In particular it is shown in \cite{schie1} that the flow is stable near a critical point, i.e.\ a metric $g$ together with a parallel spinor $\varphi$ (in this article we will restrict to dimensions $n \geq 3$, the surface case has been addressed in \cite{aww2}). The behaviour of the volume-normalized spinor flow near a constrained critical point (e.g.\@ a metric $g$ together with a Killing spinor $\varphi$) is more subtle, see again \cite{schie1}. The spinor flow equation is a quasilinear parabolic equation, thus understanding the global behavior of the spinorial energy and the spinor flow equation is a difficult problem in general. Understanding their behaviour in the more restricted homogeneous setting has been one of the motivations for this present article.


A fact of fundamental importance for this study is that the spinorial energy is spin diffeomorphism invariant. This invariance implies that the spinor flow preserves symmetries, in the sense that if $F^*(g,\varphi) = (g, \varphi)$, then $F^*(g_t, \varphi_t) = (g_t, \varphi_t)$ for the solution of the spinor flow with initial condition $(g, \varphi)$. This suggests that we can define a {\em homogeneous spinor flow} for initial conditions, which are invariant under a transitive group action $G \curvearrowright M$. The domain $\mc{N}$ of the spinorial energy can be considered as the space of sections of the so-called \emph{universal spinor bundle}, which fits the space of metrics and the associated spinor bundles into a single fiber bundle. The homogeneous spinor flow is then a dynamical system on the finite-dimensional subspace $\mc{N}^G \subset \mc{N}$ of $G$-invariant sections of the universal spinor bundle. The spinorial energy functional and the $L^2$-metric on $\mc{N}^G$ can be explicitly computed. In some situations the space $\mc{N}^G$ and the spinorial energy functional is sufficiently simple to describe the behaviour of the spinor flow. In this paper, the calculations are done for unimodular three dimensional Lie groups, almost abelian Lie groups and the flag manifold in dimension six.
\section{The spinor flow in general}
\subsection{The universal spinor bundle and the spinorial energy}
Let $M$ be a spin manifold. By this we mean a smooth oriented $n$-dimensional manifold such that the principal $\GL_+(n)$ bundle of oriented frames $P$ admits a double cover $\pi:\tilde P\rightarrow P$, where $\tilde P$ is a  $\widetilde{\GL}_+(n)$ principal bundle, such that the group action commutes with the covering map, i.e.\@ the following diagram commutes
\begin{displaymath}
  \xymatrix{ \tilde P \times \widetilde{\GL}_+(n) \ar[d] \ar[r] & \tilde P \ar[d]^{\pi} \ar[dr] & \\
    P \times \GL_+(n) \ar[r] & P \ar[r] & M.} 
\end{displaymath}
The principal bundle $\tilde P$ is called a {\em topological spin structure} on $M$.

A Riemannian metric $g$ on $M$ defines a reduction of $P$ to the structure group $\SO(n) \subset \GL_+(n)$. This reduction is given by the bundle $P_g$ of oriented orthonormal frames of $(M,g)$. The preimage $\tilde P_g = \pi^{-1}(P_g) \subset \tilde P$ defines a $\Spin(n)$ reduction of $\tilde P$. The bundle $\tilde P_g$ is a {\em spin structure} on $(M,g)$. Thus, for a spin manifold all spin structures arise as subbundles of the topological spin structure $\tilde P$. This observation can be used to fit all spinor bundles into one fiber bundle, called the {\em universal spinor bundle}.

Recall that the complex spinor bundle of a spin manifold $(M,g)$ is given by
$$\Sigma_g M = \tilde{P}_g \times_{\rho_n} \Sigma_n,$$
where $\Sigma_n = \C^{2^{[n/2]}}$ and $\rho_n : \Spin(n) \to \End(\Sigma_n)$ is the standard spin representation. Since
\[
\widetilde{\GL}_+(n) \to \widetilde{\GL}_+(n) / \Spin(n)  \cong \odot^2_+ (\R^n)^* 
\]
is a $\Spin(n)$-principal bundle, we can form the associated vector bundle
\[
F_n = \widetilde{\GL}_+(n) \times_{\rho_n} \Sigma_n \to \odot^2_+ (\R^n)^*,
\]
which carries an action from the left of $\widetilde{\GL}_+(n)$. The {\em universal spinor bundle} is by definition the fiber bundle
\[
\Sigma M = \tilde P \times_{\widetilde{\GL}_+(n)} F_n
\]
with typical fiber the total space of the vector bundle $F_n$. It thus has a double fibration structure
\[
\xymatrix@R=.5cm{ \Sigma M \ar[dd] \ar[rd]& \\ & \odot^2_+ T^*\! M \ar[ld] \\ M &}
\]
where $\odot^2_+ T^*\! M\cong \tilde P/\Spin(n)$ is the bundle of positive definite symmetric bilinear forms on $TM$ and $\Sigma M \to \odot^2_+ T^*\! M$ is a vector bundle with typical fiber $\Sigma_n$. 

A section $\Phi$ of $\Sigma M$ may be identified with a $\widetilde{\GL}_+(n)$-equivariant map $\Phi : \tilde P \to F_n$ so that the 
projection $F_n \to \odot^2_+ (\R^n)^*$ determines a section $g_{\Phi}$ of $\odot^2_+ T^*\!M$, i.e.\ a Riemannian metric on $M$. The 
metric $g_{\Phi}$ defines a geometric spin structure $\tilde P_{g_{\Phi}}\subset \tilde P$, which is explicitly given by
$\tilde P_{g_{\Phi}}=\Phi^{-1}(\Spin(n) \times_{\rho_n} \Sigma_n)$. Thus the restriction of
$\Phi$ to $\tilde P_{g_{\Phi}}$ naturally defines a $\Spin(n)$-equivariant map $\varphi_{\Phi}:\tilde P_{g_{\Phi}}\rightarrow \Sigma_n$,
i.e. a section $\varphi_{\Phi}$ of the spinor bundle $\Sigma_{g_{\Phi}} M$. 
So to any universal spinor field $\Phi$, we may associate the pair 
\[
(g_{\Phi}, \varphi_{\Phi}) \in \Gamma(\odot^2_+ T^*\!M) \times \Gamma(\Sigma_{g_\Phi}M). 
\]
Conversely, a metric $g$ and a section $\varphi$ of $\Sigma_g M$ define in a unique way a section of $\Sigma M$. It is this property that leads us to call $\Sigma M$ the {\em universal spinor bundle} of the spin manifold $M$.
 The space of sections of $\Sigma M$ will also be denoted by $\mc{F}$.
The space of unit length sections of the universal spinor bundle
$$\mathcal{N} = \{\Phi = (g, \varphi) \in \Gamma(\Sigma M) : |\varphi| \equiv 1\} \subset \mc F$$
is the domain of the spinorial energy functional. To define this functional, recall that the Levi--Civita connection on $TM$ induces a connection $\nabla^g$ on $\Sigma_g M$.

The {\em spinorial energy functional} is given by
$$\mc{E} : \mc{N} \to \R$$
$$\mc{E}(g, \varphi) = \frac{1}{2} \int_M |\nabla^g \varphi|^2 \vol_g.$$
This energy functional has several important symmetries. We restrict here to the invariance under so-called
\emph{spin diffeomorphisms} and refer for the other symmetries to \cite{aww1}.

A {\em spin diffeomorphism} is a diffeomorphism $f:M\to M$ for which the induced map $df : P \to P$ lifts to the spin structure $\tilde{P}$. We denote by $\Diff_S(M)$ the group of such spin diffeomorphisms of $M$. Then the lifts of elements of $\Diff_S(M)$ themselves form a group $\widehat{\Diff}_S(M)$ and there is an exact sequence
$$0 \to \bZ_2 \to \widehat{\Diff}_S(M) \to \Diff_S(M) \to 0.$$
The group $\widehat{\Diff}_S(M)$ acts on the universal spinor bundle $\Sigma M$. Let $F: \tilde P \to \tilde P$ be an element of $\widehat{\Diff}_S(M)$.
Considering elements of $\Gamma(\Sigma M)$ as $\widetilde{\GL}_+(n)$-equivariant maps $\tilde P \to F_n$, the map $F$ acts on $\Gamma(\Sigma M)$ by precomposition, i.e.\@ $F^* \Phi = \Phi \circ F$ for $\Phi\in \Gamma(\Sigma M)$. One easily checks that $\cE(F^*\Phi)=\cE(\Phi)$
for any universal spinor field $\Phi$.
\subsection{The spinor flow}
The {\em spinor flow} is the negative gradient flow of the spinorial energy functional $\mc{E}$ with respect to the natural $L^2$-metric on $\mc{N}$, which we will describe momentarily. Thereto, look first at the principal $\mathrm{SO}(n)$-bundle $\GL_+(n)\rightarrow \GL_+(n)/\mathrm{SO}(n)=\odot^2_+ (\R^n)^*$ and
note that the decomposition of an $n\times n$-matrix into its symmetric and anti-symmetric part naturally induces a 
splitting of $T\GL_+(n)$ into a horizontal and vertical distribution, see the text before Lemma \ref{H-action_parallel} for an explicit description. So we get a natural  $\mathrm{SO}(n)$-connection on this principal bundle and so also a natural principal $\mathrm{Spin}(n)$-connection on the $\Spin(n)$-bundle $\widetilde{\GL}_+(n) \to \widetilde{\GL}_+(n) / \Spin(n)=\odot^2_+ (\R^n)^*$. Both of these connections are known as {\em Bourguignon--Gauduchon connection} as they were first introduced by these two authors in their joint paper \cite{bg}.
Using the Bourguignon--Gauduchon connection, one may define a natural horizontal distribution on the vector bundle $\Sigma M\rightarrow \odot^2_+ T^* M$
and so obtains an isomorphism
$$T_{g_x, \varphi_x} \Sigma M_x \cong \odot^2 T_x^* M \oplus (\Sigma_g M)_x$$
for all $x \in M$.
This isomorphism yields an identification
$$T_{(g, \varphi)} \mc{N} = \Gamma(\odot^2 T^* M \oplus \Sigma_g M^{\perp \varphi}),$$
i.e. we may split the tangent space $T_{(g, \varphi)} \mc{N}$ into metric and spinorial directions.

On $\Gamma(\odot^2 T^* M)$ there is a natural $L^2$-metric $((\cdot\,,\cdot))_g$, which arises from integrating the pointwise scalar product $(\cdot\,,\cdot)_g$
on $\odot^2 T^* M$ induced by $g$. Similary, one obtains a natural $L^2$-metric $\langle \langle \cdot\,,\cdot\rangle\rangle_g$ on $\Sigma_g M$ by integrating the pointwise scalar product $\langle \cdot\,,\cdot\rangle$ on $\Sigma_g M$, the latter being the one induced by the natural $\mathrm{Spin}$-invariant scalar product on $\Sigma_n$. The gradient of the functional $\mc{E}$ may now be defined With respect to that $L^2$-metric
on $T_{(g, \varphi)} \mc{N}$. The negative gradient of $\mc{E}$ will be denoted by $Q$, i.e.\@
\begin{equation}\label{eq:defQ}
Q(g,\varphi) = - \grad \mc{E}(g, \varphi).
\end{equation}
The \emph{spinor flow} is then defined by the equation
\begin{equation}\label{eq:spinorflow}
\partial_t (g_t, \varphi_t) = Q(g_t, \varphi_t).
\end{equation}
The negative gradient $Q$ can be split into a metric part $Q_1(g, \varphi) \in \Gamma(\odot^2 T^* M)$ and a spinorial part $Q_2(g, \varphi) \in \Gamma(\Sigma_g M^{\perp \varphi})$. In \cite{aww2}, the following formulas for the components of $Q$ were found:
\begin{align*}
  Q_1(g, \varphi) & = -\frac{1}{4} |\nabla^g \varphi| g - \divgc_g T_{g, \varphi} + \frac{1}{2} \langle \nabla^g \varphi \otimes \nabla^g \varphi \rangle,\\
  Q_2(g, \varphi) & = - \nabla^{g*} \nabla^g \varphi + |\nabla^g \varphi|^2 \varphi,
\end{align*}
where
$$T_{g,\varphi}(X,Y,Z) = \frac{1}{2} \left( \langle X \cdot Y \cdot \varphi, \nabla^g_Z \varphi \rangle + \langle X \cdot Z \cdot \varphi, \nabla^g_Y \varphi \rangle \right)$$
and
$$ \langle \nabla^g \varphi \otimes \nabla^g \varphi \rangle(X,Y) = \langle \nabla^g_X \varphi, \nabla^g_Y \varphi \rangle.$$
To understand the long-time behaviour of the spinor flow, it is imperative to understand finite-time singularities of that flow. Self-similar solutions of the spinor flow are possible singularities.
A particular class of self-similar solutions are those which evolve only by scaling the metric, or equivalently critical points of the volume-normalized spinor flow. To obtain such critical points one restricts the spinorial energy functional
$\cE$ to $\mathcal{N}_1 := \{\Phi = (g, \varphi) \in \mathcal{N} : \vol^g(M)=1\}$.
Using that $T_{(g,\varphi)} \mathcal{N}_1=\left\{(\dot{g},\dot{\varphi})\in T_{(g,\varphi)}\mathcal{N}: ((\dot{g},g))_g=0 \right\}$, one 
computes that the negative gradient $\tilde{Q}(g,\varphi)=(\tilde{Q}_1(g,\varphi),\tilde{Q}_2(g,\varphi))$ of $\cE|_{\mathcal{N}_1}$ 
fulfills
\begin{equation}\label{eq:tildeQ}
\tilde{Q}_1(g,\varphi)=Q_1(g,\varphi)+\frac{n-2}{2n}\cE(g,\varphi)\, g,\qquad \tilde{Q}_2(g,\varphi)=Q_2(g,\varphi)
\end{equation}
and the \emph{volume-normalized spinor flow} is given by
\begin{equation}\label{eq:volumenormspinflow}
\partial_t (g_t, \varphi_t) = \tilde{Q}(g_t, \varphi_t)
\end{equation}
Since $\cE(F^*\Phi)=\cE(\Phi)$ for any $\Phi\in \mc{M}$ and any $F\in \widehat{\Diff}_S(M)$ one obtains directly:
\begin{proposition}\label{pro:spinorflowspindiffeos}
Let $\Phi\in \mc{N}$, $\tilde\Phi\in \mc{N}_1$ and $F\in \widehat{\Diff}_S(M)$. Moreover, let
$(\Phi_t)_{t\in I}$ be the solution of the spinor flow with initial value $\Phi$
and $(\tilde\Phi_t)_{t\in I}$ be the solution of the volume-normalized spinor flow with initial value $\tilde\Phi$.

Then $Q(F^*\Phi)=F^*Q(\Phi)$ and $\tilde{Q}(F^*\tilde\Phi)=F^*\tilde{Q}(\tilde\Phi)$ and, consequently,
$(F^*\Phi_t)_{t\in I}$ is the solution of the spinor flow with initial value $F^*\Phi$ and
$(F^*\tilde\Phi_t)_{t\in I}$ is the solution of the volume-normalized spinor flow with initial value $F^*\tilde{\Phi}$.
\end{proposition}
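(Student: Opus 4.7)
The plan is to reduce everything to two facts: (i) $\cE$ is $\widehat{\Diff}_S(M)$-invariant, which is stated in the excerpt, and (ii) $F^*$ acts as an isometry of $\mc{N}$ with respect to the $L^2$-metric. Granting these, equivariance of the gradient $Q$ is formal, equivariance of $\tilde Q$ follows from the explicit formula \eqref{eq:tildeQ}, and the flow statements drop out by differentiating in time and invoking uniqueness.

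First I would verify the $L^2$-isometry. Let $F\in \widehat{\Diff}_S(M)$ cover $f\in \Diff_S(M)$. On sections of $\Sigma M$ the pullback sends $\Phi=(g,\varphi)$ to $F^*\Phi=(f^*g, F^*\varphi)$, where the second component is a section of $\Sigma_{f^*g}M$ obtained via the spin lift of $df$, which intertwines $\Sigma_g M$ and $\Sigma_{f^*g}M$ by a fiberwise isometry. The induced map $F^*\colon T_\Phi\mc{N}\to T_{F^*\Phi}\mc{N}$ therefore acts tensorially on the $\odot^2 T^*M$ component and by fiberwise isometries on the spinor component, so the pointwise scalar products satisfy $(f^*\dot g_1,f^*\dot g_2)_{f^*g}(p)=(\dot g_1,\dot g_2)_g(f(p))$ and the analogous identity for spinors. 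Combined with the change of variables $\vol_{f^*g}=f^*\vol_g$ this yields $((F^*\dot\Phi_1,F^*\dot\Phi_2))_{F^*\Phi}=((\dot\Phi_1,\dot\Phi_2))_\Phi$.

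Next, differentiating the invariance relation $\cE(F^*\Phi)=\cE(\Phi)$ in the direction of $\dot\Phi$ gives $d\cE|_{F^*\Phi}(F^*\dot\Phi)=d\cE|_\Phi(\dot\Phi)$. Combining this with the isometry property I would compute
\[
((F^*Q(\Phi),F^*\dot\Phi))_{F^*\Phi}=((Q(\Phi),\dot\Phi))_\Phi=-d\cE|_\Phi(\dot\Phi)=-d\cE|_{F^*\Phi}(F^*\dot\Phi)=((Q(F^*\Phi),F^*\dot\Phi))_{F^*\Phi},
\]
and, since $F^*$ is surjective on tangent spaces, conclude $Q(F^*\Phi)=F^*Q(\Phi)$. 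For the volume-normalized statement, $F^*$ preserves $\mc{N}_1$ because $\vol^{f^*g}(M)=\vol^g(M)$, and the metric component of $F^*\Phi$ is $f^*g_\Phi$, so \eqref{eq:tildeQ} together with the equivariance of $Q$ yields $\tilde Q(F^*\Phi)=F^*\tilde Q(\Phi)$.

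Finally, for the solutions themselves, differentiating in time gives $\partial_t(F^*\Phi_t)=F^*(\partial_t\Phi_t)=F^*Q(\Phi_t)=Q(F^*\Phi_t)$ and the same for $\tilde Q$, so $(F^*\Phi_t)_{t\in I}$ satisfies the (volume-normalized) spinor flow equation with initial value $F^*\Phi$; uniqueness from \cite{aww1} then identifies it with the corresponding solution. The main point requiring care is the $L^2$-isometry of $F^*$, since the spinor bundles $\Sigma_g M$ and $\Sigma_{f^*g}M$ are distinct objects depending on the metric, and one must verify that the metric-dependent pullback really does lift to an isometry of the $L^2$-structures on the tangent spaces of $\mc{N}$. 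Once that is in hand, all remaining steps are formal consequences.
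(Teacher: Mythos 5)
Your proof is correct and takes essentially the same approach as the paper, which simply asserts that the proposition ``obtains directly'' from the $\widehat{\Diff}_S(M)$-invariance of $\cE$ without spelling out the details. What you supply—the $L^2$-isometry of $F^*$, the resulting equivariance of $Q$, the preservation of $\mc{N}_1$, and uniqueness of solutions—is precisely the standard justification the paper leaves implicit.
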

\section{Universal spinor fields on homogeneous manifolds}
For the purposes of this article it is clearly necessary to understand the universal spinor bundle and its invariant sections on a homogeneous manifold.

A manifold $M$ is called \emph{homogeneous} if a Lie group $G$ acts transitively on it. Fix now $x \in M$ for the rest of this article. Then $g\cdot x\mapsto gH$ is a diffeomorphism from $M$ to the quotient manifold $G/H$, where $H$ is the stabilizer subgroup of the point $x \in M$. Hence, we may identify $M$ with $G/H$ and so $T_x M$ with $T_{eH} G/H$. Throughout this article the natural assumptions that $G$ is connected and simply connected, that $H$ is compact and its action of $H$ is {\em reductive} are made. The action of $H$ is by definition reductive, if the Lie algebra $\mf{g}$ splits as $\mf{h} \oplus \mf{p}$, where $\mf{h}$ is the Lie algebra of $H$ and $\mf{p}$ is invariant under the adjoint action of $H$. Denoting then the canonical projection from $G$ to $G / H$ by $\pi$, the restriction of $d \pi_e: \mf{g} \to T_{eH} G/H\cong T_x M$ to $\mf{p}$ is an isomorphism and we will from now on identify $T_x M$ with $\mf{p}$.

Assume now that $M$ is an oriented manifold and that $G$ acts in an orientation preserving manner. Consider the {\em isotropy representation}
$$\alpha: H \to \GL_+(\mf p),\qquad h \mapsto (dl_h)_e|_{\mf p}$$
and choose a $G$-invariant background metric $\bar{g}$. Note that such a metric is given
by an $H$-invariant scalar product on $\mf{p}$ and that such a metric exists on $M$ if and only if $H$ is compact. Since we assumed $H$ to be compact, this is the case. If $M$ is compact, we additionally assume that
$\vol^{\bar{g}}(M)=\int_M \vol^{\bar{g}}=1$.
Fix an oriented orthonormal basis $(X_1, \ldots, X_n)$ of $\mf{p} = T_x M$, identifying $\mf{p}$ with $\R^n$. With respect to this identification the isotropy representation $\alpha$ becomes a homomorphism into $\mathrm{SO}(n)\subseteq\GL_+(n)$.

The tangent bundle of $M$ can be constructed from the group $G$ and the isotropy representation $\alpha: H \to \GL_+(n)$ as the associated bundle $G \times_{\alpha} \R^n$. Notice that the isomorphism
$$TM \to G \times_{\alpha} \R^n$$
depends on the choice of the basis $(X_1,\ldots,X_n)$ of $\mf{p}\cong T_x M$. The bundle $P$ of oriented frames of $M$ is isomorphic to $G \times_{\alpha} \GL_+(n)$. Assuming there is a lift
$$\tilde{\alpha} : H \to \widetilde{\GL}_+(n)$$
of the isotropy representation, the associated bundle
$$\tilde P = G \times_{\tilde \alpha} \widetilde{\GL}_+(n)$$
forms a topological spin structure. Thus, a topological spin structure compatible with the homogeneous structure is given by a lift of the isotropy representation, leading to the following definition.
\begin{definition}
  An {\em equivariant spin structure} on a homogeneous manifold $G/H$ is a lift $\tilde \alpha: H \to \widetilde{\GL}_+(n)$ of the isotropy representation $\alpha: H \to \GL_+(n)$.
\end{definition}
Given such an equivariant spin structure, the universal spinor bundle equals
$$\Sigma M = G \times_{\tilde \alpha} F_n,$$
where $\widetilde{\GL}_+(n)$ acts on $F_n$ via
$$\tilde A \cdot [\tilde B, \varphi] = [\tilde A \tilde B, \varphi].$$
A {\em universal spinor field} is a section of $\Sigma M$ and may hence be identified with an $H$-equivariant smooth map
$$\Phi: G \to F_n.$$

Now note that as the group of diffeomorphisms isotopic to the identity $\Diff_0(M)$ is a subgroup of $\Diff_S(M)$, the group $G$ acts by 
spin diffeomorphisms. Hence, we may define:
\begin{definition}
  A {\em $G$-invariant universal spinor field} is a section $\Phi$ of $\Sigma M$, which is invariant under the action of $G$, i.e.\@ which satisfies $F^* \Phi = \Phi$ for any $F \in \widehat{\Diff}_S(M)$ that arises as the lift of a $g \in G \subset \Diff_S(M)$.
\end{definition}
With respect to the identification $\Sigma M = G \times_{\tilde \alpha} F_n$, the action of $G$ on $\mc{F}$ is given by left-translation 
in the argument of $F\in \mc{F}$, $F:G\rightarrow F_n$, and so a $G$-invariant universal spinor field $\Phi$ corresponds to a constant $H$-equivariant map $G \to F_n$ or in other words an $H$-invariant element in $F_n$. Similarly, a $G$-invariant metric $g$ on $M$ corresponds to an $H$-invariant element in $\odot^2_+\mf{p}^* \cong \odot^2_+ (\R^n)^*$. Let $\mc{F}^G = F_n^H$ denote the space of $G$-invariant universal spinor fields, $\mc{N}^G\subseteq \mc{F}^G$ the subspace of $G$-invariant universal spinor fields of unit length
and $\mc{M}^G = (\odot^2_+ (\R^n)^*)^H$ the space of $G$-invariant metrics on $M$.

The next lemma gives us a more explicit description of the space $\mc{F}^G$:
\begin{lemma}\label{H-inv}
An element $\Phi=[\tilde A,\varphi] \in F_n$ is $H$-invariant if and only if
\medskip
\begin{enumerate}[(i)]
\item
$\tilde{A}^{-1}\tilde{\alpha}(h) \tilde{A} \in \Spin(n)$ for all $h\in H$, and
\smallskip
\item
$\varphi$ is fixed by the $H$-representation $H\ni h\mapsto \rho_n(\tilde{A}^{-1}\tilde{\alpha}(h) \tilde{A})\in \GL(\Sigma_n)$.
\end{enumerate}
\end{lemma}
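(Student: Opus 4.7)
The plan is to unwind the definitions. By construction, $H$ acts on $F_n=\widetilde{\GL}_+(n)\times_{\rho_n}\Sigma_n$ through the equivariant spin structure $\tilde\alpha:H\to\widetilde{\GL}_+(n)$ and the left $\widetilde{\GL}_+(n)$-action spelled out just before the lemma, i.e.\ $h\cdot[\tilde A,\varphi]=[\tilde\alpha(h)\tilde A,\varphi]$. So the condition ``$[\tilde A,\varphi]$ is $H$-invariant'' is precisely the equality
\[
[\tilde\alpha(h)\tilde A,\varphi]=[\tilde A,\varphi]\qquad\text{in }F_n,\quad\text{for all }h\in H.
\]
First I would fix notation for the defining equivalence relation of the associated bundle, namely $[\tilde B\tilde s,\psi]=[\tilde B,\rho_n(\tilde s)\psi]$ for $\tilde s\in\Spin(n)$, so that the task reduces to a group-theoretic question inside $\widetilde{\GL}_+(n)$.

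Next, I would analyse the forward direction. If the displayed equality holds for some $h$, then by the very definition of the equivalence classes there must exist $\tilde s(h)\in\Spin(n)$ with $\tilde\alpha(h)\tilde A=\tilde A\,\tilde s(h)$ and $\rho_n(\tilde s(h))\varphi=\varphi$. Solving the first relation gives $\tilde s(h)=\tilde A^{-1}\tilde\alpha(h)\tilde A$, and this is an element of $\Spin(n)$ by construction; this is condition~(i). Substituting back into the second relation yields $\rho_n(\tilde A^{-1}\tilde\alpha(h)\tilde A)\varphi=\varphi$, which is condition~(ii). Observe that (i) is exactly the statement that the $\widetilde{\GL}_+(n)/\Spin(n)$-component of $[\tilde A,\varphi]$ (which is the underlying $H$-invariant metric) is fixed by $h$, so that the $H$-action genuinely restricts to the spinorial fibre; condition~(ii) is then the $H$-invariance of $\varphi$ under the resulting twisted representation on that fibre.

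For the converse, assuming (i) and (ii), I would simply set $\tilde s(h):=\tilde A^{-1}\tilde\alpha(h)\tilde A$, note that $\tilde s(h)\in\Spin(n)$ by (i), and compute
\[
[\tilde\alpha(h)\tilde A,\varphi]=[\tilde A\,\tilde s(h),\varphi]=[\tilde A,\rho_n(\tilde s(h))\varphi]=[\tilde A,\varphi],
\]
where the last equality uses (ii). Since $h\in H$ was arbitrary, this gives $H$-invariance.

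There is no real obstacle here: the whole argument is a careful bookkeeping exercise in the definition of an associated bundle combined with the definition of the $H$-action via $\tilde\alpha$. The only point to watch is the order of multiplication (left vs.\ right) in the equivalence relation, because writing $\tilde\alpha(h)\tilde A=\tilde A\,\tilde s(h)$ forces the twisted Spin-element to be $\tilde A^{-1}\tilde\alpha(h)\tilde A$ rather than the conjugate on the other side, which is what makes the formulation in~(i) and~(ii) come out in that precise form.
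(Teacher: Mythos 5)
Your proof is correct and follows essentially the same route as the paper: unwinding the associated-bundle equivalence relation $[\tilde A\tilde b,\varphi]=[\tilde A,\rho_n(\tilde b)\varphi]$ together with the $H$-action $h\cdot[\tilde A,\varphi]=[\tilde\alpha(h)\tilde A,\varphi]$, and solving $\tilde\alpha(h)\tilde A=\tilde A\tilde b$ for the $\Spin(n)$-element $\tilde b=\tilde A^{-1}\tilde\alpha(h)\tilde A$. The paper states this more tersely (noting merely that $[\tilde\alpha(h)\tilde A,\varphi]=[\tilde A,\varphi]$ iff there exists $\tilde b\in\Spin(n)$ with $\tilde\alpha(h)\tilde A=\tilde A\tilde b$ and $\rho_n(\tilde b^{-1})\varphi=\varphi$), but the content and the order-of-multiplication bookkeeping you highlight are the same.
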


\begin{proof}
The action of $H$ on $F_n$ is given by
\[
h \cdot [\tilde A, \varphi] = [\tilde{\alpha}(h)\tilde A, \varphi].
\]	
Furthermore, $[\tilde{\alpha}(h) \tilde A, \varphi] = [\tilde A, \varphi] \in F_n$ if and only if there exists $\tilde b \in \Spin(n)$ such that $\tilde{\alpha}(h) \tilde A = \tilde{A}\tilde b$ and $\varphi = \rho_n(\tilde b^{-1}) \varphi$. Altogether this yields the claim.
\end{proof}
For the rest of the paper we assume that $\mc{F}^G\neq \emptyset$ and denote by $A\in \GL_+(n)$ the image of 
$\tilde{A}$ under the covering map $\widetilde{\GL}_+(n)\rightarrow \GL_+(n)$. Note that 
\[
\pi:\mc{F}^G\rightarrow \mc{M}^G, \quad
 [\tilde{A},\varphi]\mapsto g:=g^A:=\bar{g}(A^{-1}\cdot,A^{-1}\cdot)
\] 
is the projection of a vector bundle, condition (i)
in Lemma \ref{H-inv} ensuring that the projection lands in $\mc{M}^G$.
In particular, $\mc{M}^G\neq \emptyset$ as well. Moreover, we explicitly get on $\bR^n$ that
$g^A(v,w)=v^T A^{-T} A^{-1} w$ for all $v,w\in\bR^n$ and so may further identify $g^A\in \mc{M}^G\cong (\odot_+^2 (\R^n)^*)^H$ with
$A^{-T}A^{-1}\in \Sym_+(n,\bR)^H$, the latter being the space of $H$-invariant symmetric positive definite real $n\times n$-matrices.

Coming back to Lemma \ref{H-inv}, we see that the invariance condition for the spinorial part $\varphi$
of the $G$-invariant universal spinor $\Phi=[\tilde{A},\varphi]$ depends on the chosen $\tilde{A}$ and so may not be formulated
consistently for all universal spinors. However, we are only interested in getting such a consistent invariance condition
for a solution of the spinor flow $t\mapsto \Phi_t=[\tilde{A}_t,\varphi_t] \in \mc{N}^G$ in our homogeneous setting. This may be achieved
as follows:

Consider an arbitrary path $g_t$ of $G$-invariant metrics on $M$ and let $A_t$ in $\GL_+(n)$ and $\tilde{A}_t$ in $\widetilde{\GL}_+(n)$ 
be the horizontal lifts of this path with respect to the Bour\-guignon-Gauduchon connections. Moreover, recall that the
horizontal distribution $\cH$ defining the Bourguignon-Gauduchon connection on the principal $\SO(n)$-bundle
$\GL_+(n) \to \GL_+(n) / \SO(n) \cong \odot^2_+ \R^n$ and the vertical distribution $\cV$ are explicitly given by
\begin{align*}
\cH_A &= \{M \in \R^{n \times n} = T_A \GL_+(n) :  (A^{-1} M)^T = A^{-1} M\},\\
\cV_A &= \{M \in \R^{n \times n} = T_A \GL_+(n): (A^{-1} M)^T = -A^{-1} M\}.
\end{align*}
for $A\in \GL_+(n)$. Using these explicit descriptions of $\cH$ and $\cV$, one obtains:
\begin{lemma}\label{H-action_parallel}
	If $A_t$ and $\tilde{A}_t$ are horizontal lifts of a path of $G$-invariant metrics $g_t$, then the representations 
	\[
	A^{-1}_t \alpha A_t: H \to \SO(n), \, h \mapsto A^{-1}_t \alpha(h) A_t
	\]
	and
	\[
	\tilde A^{-1}_t \tilde\alpha \tilde A_t: H \to \Spin(n), \, h \mapsto \tilde A^{-1}_t \tilde\alpha(h) \tilde A_t
	\]
	do not depend on $t$.
\end{lemma}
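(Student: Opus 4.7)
The plan is to exploit uniqueness of horizontal lifts in the principal bundle $\GL_+(n) \to \odot^2_+ (\R^n)^*$. Since $g_t$ is $G$-invariant it is in particular $H$-invariant, i.e.\ $\alpha(h)^* g_t = g_t$ for every $h \in H$ and every $t$. Under the correspondence $A \leftrightarrow g^A = \bar g(A^{-1}\cdot, A^{-1}\cdot)$, the isotropy action on $\odot^2_+(\R^n)^*$ lifts to the left action $A \mapsto \alpha(h) A$ on $\GL_+(n)$, so $H$-invariance of $g_t$ means this left action preserves each fibre $\pi^{-1}(g_t)$. Consequently there is a unique $\beta_t(h) \in \SO(n)$ with $\alpha(h)A_t = A_t\beta_t(h)$, and this is precisely the representation $A_t^{-1}\alpha A_t$ evaluated at $h$.

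The key step is then to observe that the Bourguignon--Gauduchon horizontal distribution $\cH_A = \{M : (A^{-1}M)^T = A^{-1}M\}$ is invariant under left multiplication by any matrix: indeed $(\alpha(h)A)^{-1}(\alpha(h)M) = A^{-1}M$. Hence the path $t \mapsto \alpha(h)A_t$ is horizontal, and it projects to $g_t$ itself by $H$-invariance. On the other hand, the path $t \mapsto A_t\beta_0(h)$ is also horizontal --- this is the right $\SO(n)$-equivariance of a principal connection, which for $\cH$ follows from $(Ag)^{-1}(Mg) = g^{-1}(A^{-1}M)g$ being symmetric whenever $A^{-1}M$ is symmetric and $g \in \SO(n)$ --- and it has the same initial value $\alpha(h)A_0 = A_0\beta_0(h)$. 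By uniqueness of horizontal lifts, $\alpha(h)A_t = A_t\beta_0(h)$ for all $t$, so $\beta_t(h) = \beta_0(h)$.

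For the spin version I would run exactly the same argument in $\widetilde{\GL}_+(n)$: the Bourguignon--Gauduchon spin connection is by construction the lift of the $\SO(n)$-connection, so horizontality, the left $\tilde\alpha(H)$-action and the right $\Spin(n)$-action all translate directly to the double cover, yielding a constant $\tilde\beta_0(h)\in\Spin(n)$ with $\tilde\alpha(h)\tilde A_t = \tilde A_t\tilde\beta_0(h)$ for all $t$. I do not anticipate a serious obstacle; the only point requiring care is keeping the left $H$-action and the right $\SO(n)$/$\Spin(n)$-action straight, so that the uniqueness theorem for horizontal lifts can be invoked unambiguously on both levels.
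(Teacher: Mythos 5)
Your proof is correct and rests on the same two facts as the paper's: that the Bourguignon--Gauduchon horizontal distribution $\cH$ is invariant under left multiplication by $\GL_+(n)$ and equivariant under right multiplication by $\SO(n)$. The paper differentiates $t \mapsto A_t^{-1}\alpha(h)A_t$ directly and observes that its derivative lies in both $\cH_{A_t^{-1}\alpha(h)A_t}$ and $\cV_{A_t^{-1}\alpha(h)A_t}$, hence vanishes; you instead exhibit $t \mapsto \alpha(h)A_t$ and $t \mapsto A_t\beta_0(h)$ as two horizontal lifts of $g_t$ through the same initial point and invoke uniqueness of horizontal lifts. These are logically equivalent packagings of the same underlying computation, with yours being perhaps slightly more conceptual (it avoids the explicit derivative of the conjugate) and the paper's slightly more self-contained (it avoids appealing to the uniqueness theorem). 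The spin half of your argument, transporting everything through the covering $\widetilde{\GL}_+(n)\to\GL_+(n)$, is also fine and matches the paper's final sentence.
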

\begin{proof}
Fix $h \in H$. Since $A^{-1}_t \alpha(h) A_t \in \SO(n)$ for all $t$, clearly 
\[
\frac{d}{dt} \left(A_t^{-1}\alpha(h)A_t\right) \in \cV_{A_t^{-1} \alpha(h) A_t}.
\]
On the other hand, since $A_t$ is a horizontal curve, $\dot{A}_t:=\frac{d}{dt}A_t\in \cH_{A_t}$ for all $t$ and hence
\begin{equation*}
\frac{d}{dt} \left(A_t^{-1}\alpha(h)A_t\right)=-A_t^{-1} \dot{A}_t A_t^{-1} \alpha(h) A_t+A_t^{-1} \alpha(h) \dot{A}_t \in \cH_{A_t^{-1} \alpha(h) A_t}
\end{equation*}
Here we have used that the horizontal distribution $\cH$ is left-invariant by $\GL_+(n)$ and right-invariant by $\SO(n)$. However, since vertical and horizontal spaces are complementary, this implies that $\frac{d}{dt} \left(A_t^{-1}\alpha(h)A_t\right)=0$, i.e.\ $t\mapsto A_t^{-1}\alpha(h)A_t$ is constant.
Since $\tilde{A}_t^{-1}\tilde{\alpha}(h)\tilde{A}_t$ is mapped to $A_t^{-1}\alpha(h)A_t$ by the covering $\widetilde{\GL}_+(n)\rightarrow \GL_+(n)$,
the map $h \mapsto \tilde A^{-1}_t \tilde\alpha(h) \tilde A_t$ has to be constant as well.
\end{proof}

As a consequence, the vector bundle $\pi: \mc{F}^G \to \mc{M}^G$ may be trivialized along a path of $G$-invariant metrics by parallel transport. 

\begin{lemma}\label{le:trivialization}
Let $g\in \mc{M}^G  \cong \Sym_+(n,\bR)^H$ 
and $\tilde{A}\in \widetilde{\GL}_+(n)$ be such that $g=A^{-T} A^{-1}$.
If $\gamma: I =[0,T]  \to \mc{M}^G, t \mapsto g_t$ is a smooth path with $g_0 = g$, then
	\[
	\gamma^* \mc{F}^g \cong I \times \Sigma_n^H,
	\]
	where $H$ acts on $\Sigma_n$ by $\rho_n \circ \tilde A^{-1} \tilde \alpha \tilde A$.
\end{lemma}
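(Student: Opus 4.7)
The plan is to produce the trivialization via the Bourguignon--Gauduchon horizontal lift of $\gamma$, and then to use Lemma \ref{H-action_parallel} to see that the $H$-representation cut out by Lemma \ref{H-inv} does not move with $t$.

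Concretely, I would horizontally lift the smooth curve $t\mapsto g_t$ with respect to the Bourguignon--Gauduchon connection on the principal $\SO(n)$-bundle $\GL_+(n)\to\odot^2_+(\R^n)^*$, starting at $A_0 = A$; standard ODE theory yields a smooth curve $A_t \in \GL_+(n)$ with $A_t^{-T}A_t^{-1}=g_t$ for every $t \in I$. Since $\widetilde{\GL}_+(n)\to\GL_+(n)$ is a covering, this curve has a unique smooth lift $\tilde{A}_t \in \widetilde{\GL}_+(n)$ starting at $\tilde{A}_0=\tilde{A}$, and that lift is automatically horizontal for the Bourguignon--Gauduchon connection on the spin side. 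Lemma \ref{H-action_parallel} then gives that the representation
\[
h\mapsto \tilde{A}_t^{-1}\tilde{\alpha}(h)\tilde{A}_t \in \Spin(n)
\]
is independent of $t$ and thus coincides with $h\mapsto \tilde{A}^{-1}\tilde{\alpha}(h)\tilde{A}$ for every $t \in I$. In particular, the fixed-point subspace
\[
\Sigma_n^H = \{\varphi \in \Sigma_n : \rho_n(\tilde{A}^{-1}\tilde{\alpha}(h)\tilde{A})\varphi = \varphi \text{ for all } h\in H\}
\]
is the \emph{same} vector space at every time.

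With this in hand, Lemma \ref{H-inv}---whose condition (i) is automatic along the horizontal lift---identifies the fibre of $\pi: \mc{F}^G \to \mc{M}^G$ over $g_t$ with $\{[\tilde{A}_t,\varphi] : \varphi \in \Sigma_n^H\}$, and since $\varphi\mapsto [\tilde{A}_t,\varphi]$ is a linear isomorphism from $\Sigma_n$ onto its image this identification is a linear bijection onto the fibre. The desired trivialization is therefore
\[
\Psi: I \times \Sigma_n^H \to \gamma^*\mc{F}^G, \qquad (t,\varphi) \mapsto \bigl(t,[\tilde{A}_t,\varphi]\bigr),
\]
which is fibrewise a linear isomorphism by the preceding step and smooth because $t\mapsto \tilde{A}_t$ is smooth. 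The only mildly delicate point is producing the simultaneous smooth horizontal lift to the spin double cover, but this is routine covering-space theory once the horizontal lift in $\GL_+(n)$ is in place, so I do not anticipate a serious obstacle; the substance of the lemma really sits in Lemma \ref{H-action_parallel}.
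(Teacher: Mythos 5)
Your proof is correct and follows exactly the paper's argument: take the Bourguignon--Gauduchon horizontal lift $\tilde{A}_t$ starting at $\tilde{A}$, invoke Lemma \ref{H-action_parallel} to see that the $H$-representation on $\Sigma_n$ is $t$-independent, and write down $(t,\varphi)\mapsto[\tilde{A}_t,\varphi]$ as the trivialization. The extra remarks you give (smoothness of the lift via covering-space theory, preservation of condition (i) of Lemma \ref{H-inv}) are implicit in the paper's more compact proof but add no genuinely new ideas.
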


\begin{proof}
Let $\tilde A_t$ be the horizontal lift of $g_t$ to $\widetilde{\GL}_+(n)$ with $\tilde{A}_0=\tilde A$, then $\tilde A^{-1}_t \tilde\alpha \tilde A_t = \tilde A^{-1} \tilde \alpha \tilde A$ for all $t \in I$ according to Lemma \ref{H-action_parallel}. Then
\[
I \times \Sigma_n^ H \to \gamma^* \mc{F}^g, \quad (t, \varphi) \to [\tilde A_t, \varphi]
\]
provides a trivialization.
\end{proof}

If $\Phi_t$ is a family of $G$-invariant universal spinor fields covering the family of $G$-invariant metrics $g_t$ (with $g_0=g$), 
we may therefore identify it with the family
\[
(g_t, \varphi_t) \in \mc{M}^G \times \Sigma_n^H\cong \Sym_+(n,\bR)^H\times \Sigma_n^H
\]
obtained by writing $\Phi_t=[ \tilde A_t, \varphi_t]$ for $\tilde A_t$ the horizontal lift of $g_t$. This will apply in particular to a solution of the spinor flow 
in our homogeneous setting.

\begin{remark*}
For practical purposes, we will compute $\mc{F}^G$ by linearization of the group action, i.e.\@ we will compute $\Sym_+(n,\bR)^{\mf{h}}$
instead of $\Sym_+(n,\bR)^H\cong \mc{M}^G$. Similarly, we determine $\Sigma_n^{\mf{h}}$ instead of $\Sigma_n^H$.
\end{remark*}

\section{The spinorial flow on homogeneous manifolds}

\label{SpEn}
\subsection{Spinorial energy in the homogeneous setting}
The considerations of the former section enable us to explicitly calculate the spinorial energy of an invariant section $\Phi$ of the universal spinor bundle $\Sigma M$ of a \emph{compact} $G$-homogeneous space $M=G/H$ with properties as before. We use all the notations and identifications from the last section and first
of all choose a representative $(\tilde{A},\varphi)\in \widetilde{\GL}_+(n)\times \Sigma_n$ of $\Phi$, i.e. $\Phi=[\tilde{A},\varphi]$, and
let $A$ be the image of $\tilde A$ under the map $\widetilde{\GL}_+(n)$ to $\GL_+(n)$. We will express
the spinorial energy $\cE(\Phi)$ of $\Phi$ in terms of this representative and note that the formula is of course independent of the 
chosen representative although one may not spot this directly when looking at the formula. 

We first will derive a formula for the spin connection adapted to our setting, namely the one already given in \cite{bae}. For 
completeness and to help the reader in understanding all the above identifications, we recapitulate the computations
done in \cite{bae} to derive that formula. Let us start with the Levi-Civita connection for vector fields.
As we have a $G$-homogeneous space, there are distinguished vector fields given by the fundamental vector fields $\overline{X} \in \Gamma(TM)$ of 
elements $X\in \mf{g}$. These fundamental vector 
fields are Killing for the induced $G$-invariant Riemannian metric $g=g^A$ and fulfill $\overline{X}(x)=X$ if $X\in \mf{p}=T_x M$ and 
$\overline{X}(x)=0$ if $X\in \mf{h}$. The following calculations will all be made at the point $x \in M$. In this setting the Levi--Civita connection satisfies the identity
\begin{equation*}
g(\nabla_{\overline{X}}^g \overline{Y}, \overline{Z})(x)
=-\tfrac{1}{2}\left(g(\overline{[X,Y]_{\mf{p}}},\overline{Z})+g(\overline{[X,Z]_{\mf{p}}},\overline{Y})+g(\overline{[Y,Z]_{\mf{p}}},\overline{X})\right)(x)
\end{equation*}
by \cite[7.28 Proposition]{besse} for all $X,Y,Z\in \mf{g}$, where the index $\mf{p}$ denotes the projection $\mf{g}\rightarrow \mf{p}$ along $\mf{h}$.

Now note that we have we have fixed an oriented basis $(X_1,\ldots,X_n)$ of $\mf{p}$, which constitutes an orthonormal basis for
a $G$-invariant background metric $\bar{g}$. As we have chosen a specific representative $(\tilde{A},\varphi)\in \widetilde{\GL}_+(n)\times \Sigma_n$, we also 
have a natural orthonormal basis $(Y_1,\ldots,Y_n):=(A X_1,\ldots,A X_n)$ of $T_x M=\mf{p}$ for the $G$-invariant Riemannian metric 
$g=g^A=\bar{g}(A^{-1}\cdot,A^{-1}\cdot)$. We set
\begin{equation*}
\begin{split}
c_{ijk}(A):=&-g([Y_k,Y_i]_{\mf{p}},Y_j)-g([Y_k,Y_j]_{\mf{p}},Y_i)-g([Y_i,Y_j]_{\mf{p}},Y_k)\\
=&\, \bar{g}(A^{-1}[A X_i,A X_k]_{\mf{p}},X_j)+\bar{g}(A^{-1}[A X_j,A X_k]_{\mf{p}},X_i)\\
&-\bar{g}(A^{-1}[A X_i,A X_j]_{\mf{p}},X_k)
\end{split}
\end{equation*}
for all $i,j,k=1,\ldots,n$ and observe that 
\begin{equation*}
g(\nabla_{Y_k} \overline{Y}_i,Y_j)=\tfrac{1}{2}c_{ijk}(A).
\end{equation*}
for all $i,j,k=1,\ldots,n$.

Choose now some $\tilde{b}\in \tilde{P}_g$ covering $b:=(Y_1,\ldots,Y_n)\in P_g$. Then 
\mbox{$G\times_{\tilde{A}^{-1}\tilde{\alpha} \tilde{A}} \Spin(n)$} is isomorphic to $\tilde{P}_g$ via the map
$[\tilde{g},\tilde{B}]\mapsto \tilde{g}(\tilde{b})\cdot \tilde{B}$ for $\tilde{g}:\tilde{P}_g\rightarrow \tilde{P}_g$ covering $d\tilde{g}_x:P_g\rightarrow P_g$.
Thus, we also have
\begin{equation*}
\Sigma_g M=\tilde{P}_g\times_{\rho_n} \Sigma_n\cong (G\times_{\tilde{A}^{-1}\tilde\alpha \tilde{A}} \Spin(n))\times_{\rho_n} \Sigma_n \cong G\times_{\rho_n(\tilde{A}^{-1}\tilde\alpha \tilde{A})} \Sigma_n,
\end{equation*}
where the latter isomorphism is given by $[[\tilde{g},\tilde{B}],\tilde{\varphi}]\mapsto [\tilde{g},\rho_n(\tilde{B})\tilde{\varphi}]$ with inverse
$[\tilde{g},\tilde{\varphi}]\mapsto [[\tilde{g},1_{\Spin(n)}],\tilde{\varphi}]$. In our case, when identifying $\Sigma_g M\cong G\times_{\rho_n(\tilde{A}^{-1}\tilde\alpha \tilde{A})} \Sigma_n$, the $G$-invariant spinor field induced by the $G$-invariant universal spinor $\Phi=[\tilde{A},\varphi]$ is given by $M \ni \tilde{g}\cdot x\mapsto [\tilde{g},\varphi]\in \Sigma_g M$. Hence, by an abuse of notation, we also denote the $G$-invariant spinor field itself by $\varphi$.

At this point, recall that the Levi-Civita connection of $g$ corresponds to a unique principal $\mathrm{SO}(n)$-connection on $P_g$, 
which then uniquely lifts to a $\Spin(n)$-connection on $\tilde{P}_g$ inducing the \emph{spin connection} $\nabla^g$ on the associated 
bundle $\Sigma_g M$. By identifying sections of $\Sigma_g M$ with $\Spin(n)$-equivariant maps
$\sigma:\tilde{P}_g\rightarrow \Sigma_n$, the spin connection $\nabla^g$ is explicitly given by $\nabla_X \sigma=X^*(\sigma)$,
where $X^*\in \mathfrak{X}(\tilde{P}_g)$ is the unique horizontal lift of $X\in \mathfrak{X}(M)$.
Now let $\sigma:\tilde{P}_g\rightarrow \Sigma_n$ be the $\Spin(n)$-equivariant map associated to $\varphi$ and let $\tilde{B}\in \Spin(n)$.
and $\tilde{g}\in G$. Then $[\tilde{g},\tilde{B}]\in G\times_{\tilde{A}^{-1}\tilde{\alpha} \tilde{A}} \Spin(n)\cong \tilde{P}_g$
is in the fibre of $\tilde{P}_g$ over $\tilde{g}\cdot x$ and with the above identifications we obtain
\begin{equation*}
[\tilde{g},\varphi]=\varphi(\tilde{g}\cdot x)=[[\tilde{g},\tilde{B}],\sigma([\tilde{g},\tilde{B}])]=[\tilde{g},\rho_n(\tilde{B})\sigma([\tilde{g},\tilde{B}])],
\end{equation*}
i.e. $\sigma([\tilde{g},\tilde{B}])=\rho_n(\tilde{B}^{-1})(\varphi)$.

Now let $Y\in \mf{p}$. Then $c(t):=\exp(tY)\cdot x$ fulfills $\dot{c}(0)=\overline{Y}(x)=Y$ and so there exists a (unique) curve $t\mapsto \tilde{B}(t)\in\Spin(n)$
with $\tilde{B}(0)=1_{\Spin(n)}$ such that $\tilde{c}(t):=[\exp(tY),\tilde{B}(t)]\in \tilde{P}_g$ is horizontal and fulfills 
$\dot{\tilde{c}}(0)=\overline{Y}^*(x)$. Thus, we obtain
\begin{equation*}
\begin{split}
\nabla^g_Y \sigma &=(\nabla^g_{\overline{Y}} \sigma) (x)=\left.\frac{d}{dt}\right|_{t=0} \sigma([\exp(tY),\tilde{B}(t)])
=\left.\frac{d}{dt}\right|_{t=0} \rho_n(\tilde{B}(t)^{-1})(\varphi)\\
&=-(\rho_n)_{*}(\dot{\tilde{B}}(0))(\varphi).
\end{split}
\end{equation*}
So we are left with computing $\dot{\tilde{B}}(0)$. Since
\begin{equation*}
t\mapsto [\exp(tY),B(t)]=d(\exp(tY))_x(b)\cdot B(t)
\end{equation*}
is a horizontal curve in $P_g$, all the vector fields along $c$ contained in this horizontal curve are parallel.
Hence, we get
\begin{equation*}
0=\left.\frac{\nabla^g}{dt}\right|_{t=0} \Bigl(d(\exp(tY))_x(b)\cdot B(t)\Bigr)
 =\left.\frac{\nabla^g}{dt}\right|_{t=0} d(\exp(tY))_x(b) + db_x(\dot{B}(0)),
\end{equation*}
where $b$ also denotes the map $\mathrm{SO}(n)\ni B\mapsto b\cdot B\in (P_g)_x$.
Now note that
\begin{equation*}
d(\exp(tY))_x(b)=d(\exp(tY))_x(Y_1),\ldots,\exp(tY))_x(Y_n))
\end{equation*}
and
\begin{equation*}
\begin{split}
\left.\frac{\nabla^g}{dt}\right|_{t=0} d(\exp(tY))_x(Y_k)
&=\left.\frac{\nabla^g}{dt}\right|_{t=0}\left.\frac{\partial}{\partial s}\right|_{s=0} \exp(tY)\cdot(\exp(sY_k)\cdot x)\\
&=\left.\frac{\nabla^g}{ds}\right|_{s=0}\left.\frac{\partial}{\partial t}\right|_{t=0} \exp(tY)\cdot(\exp(sY_k)\cdot x)\\
&=\left.\frac{\nabla^g}{ds}\right|_{s=0} \overline{Y}(\exp(sY_k)\cdot x)=\nabla^g_{\overline{Y}_k(x)} \overline{Y}\\
&=\sum_{j=1}^n g\left(\nabla^g_{Y_k} \overline{Y},Y_j\right) Y_j.
\end{split}
\end{equation*}
for all $k=1,\ldots,n$. Moreover, 
\begin{equation*}
db_x(\dot{B}(0))=\tfrac{d}{dt} \Bigl(b\cdot \exp(t \dot{B}(0))\Bigr)=b\cdot \dot{B}(0):=\left(\sum_{j=1}^n \dot{B}(0)_{j1} Y_j,\ldots,\sum_{j=1}^n \dot{B}(0)_{jn} Y_j\right),
\end{equation*}
Hence, $\dot{B}(0)_{jk}=-g(\nabla^g_{Y_k} \overline{Y},Y_j)\in \mathfrak{so}(n)$
and so
\begin{equation*}
\dot{\tilde{B}}(0)_{jk}=-\frac{1}{2}\sum_{j,k=1}^n g(\nabla^g_{Y_k} \overline{Y},Y_j)\, E_j\cdot E_k\in \mathfrak{spin}(n).
\end{equation*}
This implies first of all
\begin{equation*}
\begin{split}
\nabla^g_{Y_i} \sigma&=\frac{1}{2}\sum_{j,k=1}^n g(\nabla_{Y_k} \overline{Y}_i,Y_j)\, E_j\cdot E_k\cdot \varphi
                   =\frac{1}{4}\sum_{1\leq j<k\leq n} g(\nabla_{Y_k} \overline{Y}_i,Y_j)\, E_j\cdot E_k\cdot \varphi\\
									&=\frac{1}{4}\sum_{1\leq j<k\leq n} c_{ijk}(A) E_j\cdot E_k\cdot \varphi
\end{split}
\end{equation*}
for any $i=1,\ldots,n$ and then
\begin{equation*}
|\nabla^g \varphi|^2(x) =\sum_{i=1}^n |\nabla^g_{Y_i} \varphi|^2
=\frac{1}{16} \sum_i \Bigl|\sum_{j<k} c_{ijk}(A) E_j \cdot E_k \cdot \varphi\Bigr|^2.
\end{equation*}
Since $\nabla^g \varphi$ is $G$-invariant, the function $|\nabla^g \varphi|^2$ is constant on $M$. Thus,
\begin{equation*}
\int_M |\nabla^g \varphi|^2 \vol_g = |\nabla^g \varphi|^2(x) \int_M \vol_g.
\end{equation*}
The identity
\begin{equation*}
\vol_{g} = \vol_{g^A} =\det(A^{-1}) \vol_{\bar g}
\end{equation*}
implies
\begin{equation*}
\int_M \vol_g = \frac{1}{\det(A)} \int_M \vol_{\bar g}=\frac{\vol^{\bar g}(M)}{\det(A)}=\frac{1}{\det(A)}
\end{equation*}
as by assumption $\vol^{\bar g}(M)=1$. Altogether, we obtain the following theorem on the energy of $G$-invariant universal spinor fields:
\begin{theorem}
Suppose $M$ is a reductive homogeneous space and $\bar g$ is a $G$-invariant background metric of unit volume on $M$. If $\Phi = [\tilde A, \varphi] \in \mc{F}^G$, then
\begin{equation}\label{spin_energy_homog}
\mc{E}(\Phi) = \frac{1}{32} \frac{1}{\det (A)} \sum_i \Bigl|\sum_{j<k} c_{ijk}(A) E_j \cdot E_k \cdot \varphi\Bigr|^2.
\end{equation}
\end{theorem}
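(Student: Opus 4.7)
The plan is to reduce the integral defining $\mathcal{E}(\Phi)$ to a pointwise quantity at $x$ via $G$-invariance, and then to evaluate that quantity explicitly in terms of the representative $(\tilde A,\varphi)$ using a Koszul-type formula for the Levi-Civita connection adapted to the reductive splitting $\mathfrak g=\mathfrak h\oplus\mathfrak p$. I would organize the argument around the orthonormal frame $(Y_1,\ldots,Y_n):=(AX_1,\ldots,AX_n)$ of $(T_xM,g^A)$, because this frame is precisely the image of the reference frame under $A$, and so the structure constants of $\mathfrak g$ enter $\nabla^g$ through the coefficients $c_{ijk}(A)$ defined in the excerpt.

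First I would write down the Koszul formula for $\nabla^g_{\overline X}\overline Y$ evaluated at $x$ on fundamental vector fields (Besse 7.28), observe that this depends only on the $\mathfrak p$-components of brackets, and read off the identity $g(\nabla_{Y_k}\overline Y_i,Y_j)=\tfrac12 c_{ijk}(A)$. Next I would identify $\Sigma_gM$ with $G\times_{\rho_n(\tilde A^{-1}\tilde\alpha\tilde A)}\Sigma_n$, under which the invariant spinor $\varphi$ becomes the constant map $\tilde g\mapsto[\tilde g,\varphi]$. The main technical step is computing $\nabla^g_{Y_i}\varphi$ at $x$: one picks a horizontal lift $\tilde c(t)=[\exp(tY_i),\tilde B(t)]$ of $t\mapsto\exp(tY_i)\cdot x$ to the spin frame bundle, notes that $\sigma([\tilde g,\tilde B])=\rho_n(\tilde B^{-1})\varphi$, and differentiates at $t=0$ to get $\nabla^g_{Y_i}\varphi=-(\rho_n)_\ast(\dot{\tilde B}(0))\varphi$. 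The value of $\dot{\tilde B}(0)$ is then extracted from the horizontality condition on the underlying $\mathrm{SO}(n)$-frame $b\cdot B(t)$, which forces $\dot B(0)_{jk}=-g(\nabla^g_{Y_k}\overline Y_i,Y_j)$, hence $\dot{\tilde B}(0)=-\tfrac14\sum_{j,k}c_{ijk}(A)E_j\cdot E_k$ via the standard $\mathfrak{spin}(n)\to\mathfrak{so}(n)$ double cover. Substituting back gives the pointwise formula
\[
|\nabla^g\varphi|^2(x)=\frac1{16}\sum_i\Bigl|\sum_{j<k}c_{ijk}(A)\,E_j\cdot E_k\cdot\varphi\Bigr|^2.
\]

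Once this pointwise identity is in hand, the global step is short: $\nabla^g\varphi$ is $G$-invariant because $\varphi$ is, so $|\nabla^g\varphi|^2$ is constant on $M$, hence $\int_M|\nabla^g\varphi|^2\vol_g=|\nabla^g\varphi|^2(x)\cdot\vol^g(M)$. Finally, since $g=g^A=\bar g(A^{-1}\cdot,A^{-1}\cdot)$, the volume forms satisfy $\vol_g=\det(A^{-1})\vol_{\bar g}$, and the normalization $\vol^{\bar g}(M)=1$ yields $\vol^g(M)=1/\det(A)$. Multiplying by the $\tfrac12$ in the definition of $\mathcal E$ gives the prefactor $\tfrac1{32}$ and produces the claimed formula \eqref{spin_energy_homog}.

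The main obstacle is the bookkeeping in computing $\dot{\tilde B}(0)$: one must carefully track the three consecutive identifications (the associated-bundle model of $\Sigma_gM$, the Bourguignon--Gauduchon horizontality condition, and the Clifford lift of $\mathfrak{so}(n)\to\mathfrak{spin}(n)$) so that the combinatorial factor of $\tfrac14$ appears in the right place. Everything else—invariance reducing the integral to a point, and the determinantal volume comparison—is routine once the pointwise identity for $\nabla^g\varphi$ is established. Independence of the final expression on the choice of representative $(\tilde A,\varphi)$ of $\Phi$ is automatic from the construction but can be verified directly: changing $(\tilde A,\varphi)\mapsto(\tilde A\tilde b,\rho_n(\tilde b^{-1})\varphi)$ with $\tilde b\in\mathrm{Spin}(n)$ leaves $\det A$, the $c_{ijk}(A)$ (up to an $\mathrm{SO}(n)$-rotation of indices), and the Clifford-norm of the resulting vector all unchanged.
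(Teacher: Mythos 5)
Your proposal reproduces the paper's proof step by step: the Koszul formula on fundamental vector fields giving $g(\nabla_{Y_k}\overline Y_i,Y_j)=\tfrac12 c_{ijk}(A)$, the identification $\Sigma_g M\cong G\times_{\rho_n(\tilde A^{-1}\tilde\alpha\tilde A)}\Sigma_n$ with $\varphi$ constant, the horizontal-lift computation of $\dot{\tilde B}(0)$ to obtain the pointwise norm of $\nabla^g\varphi$, and finally the reduction of the integral via $G$-invariance together with $\vol_g=\det(A^{-1})\vol_{\bar g}$. This is the same argument as in the paper (a minor bookkeeping slip in your stated coefficient of $\dot{\tilde B}(0)$—it should be $-\tfrac18\sum_{j,k}c_{ijk}E_j\cdot E_k$, i.e.\ $-\tfrac14\sum_{j<k}$—does not affect your correctly stated pointwise formula or the final result, and the paper's own intermediate display has a parallel typo).
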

To discuss also the volume-normalized spinor flow in a homogeneous setting, we need to identify those
$G$-invariant universal spinor fields $[\tilde{A},\varphi]$ of length one for which additionally $\vol^{g^A}(M)=1$.
By the above computations, we see that these are exactly those satisfying $\det(A)=1$, i.e.
we have 
\[
\mc{N}_1^G:=\mc{N}^G\cap \mc{N}_1=\bigl\{[\tilde{A},\varphi]\in \mc{N}^G:\det(A)=1\bigr\}.
\]
Hence, the metric parts of elements in $\mc{N}_1^G$ constitute the subset $\Sym_+^0(n,\bR)^H:=\Sym_+(n,\bR)^H\cap \mathrm{SL}(n,\bR)$
of $\Sym_+(n,\bR)^H$.
\subsection{The homogeneous spinor flows}
Next, we study both spinor flows in our homogeneous setting. Let $\Phi\in \mc{N}^G$
or $\Phi\in \mc{N}_1^G$, respectively, and choose some $\tilde{A}\in \widetilde{\GL}_+(n)$ covering $g_{\Phi}$,
i.e.\@ with $A^{-T}A^{-1}=g_{\Phi}$. Then Proposition \ref{pro:spinorflowspindiffeos} implies that the solution $t\mapsto \Phi_t$ of the
usual or the volume-normalized spinor flow, respectively, with initial value $\Phi$ stays in $\mc{N}^G$ or $\mc{N}_1^G$,
respectively, and so may be identified by Lemma \ref{le:trivialization} with a curve $t\mapsto (g_t,\varphi_t)$
in $\Sym_+(n,\bR)^H\times \Sigma_n^H$ or $\Sym_+^0(n,\bR)^H\times \Sigma_n^H$, respectively.
Moreover, $Q(\Phi)$ or $\tilde{Q}(\Phi)$, respectively, is $G$-invariant by Proposition \ref{pro:spinorflowspindiffeos}. Thus, we have
\begin{equation*}
Q(\Phi)=(Q_1(\Phi),Q_2(\Phi))\in \Sym(n,\bR)^H\times (\varphi^{\perp})^H\subseteq \Sym(n,\bR)^H\times \Sigma_n^H
\end{equation*}
or
\begin{equation*}
\begin{split}
\tilde{Q}(\Phi)=(\tilde{Q}_1(\Phi),\tilde{Q}_2(\Phi))=(\tilde{Q}_1(\Phi),Q_2(\Phi))&\in 
T_{g} \Sym_+^0(n,\bR)^H\times (\varphi^{\perp})^H\\
&\quad \subseteq T_{g} \Sym_+^0(n,\bR)^H\times \Sigma_n^H,
\end{split}
\end{equation*}
respectively, where $H$ acts on $\Sigma_n$ by $\rho_n \circ \tilde A^{-1} \tilde \alpha \tilde A$. Hence, we define:
\begin{definition}
The {\em homogeneous spinor flow} on $M$ is the system of ODEs on $\Sym_+(n,\bR)^H\times \Sigma_n^H$
given by
\begin{equation}\label{eq:homspinflow}
(\dot{g}_t,\dot{\varphi}_t)=(Q_1(g_t,\varphi_t),Q_2(g_t,\varphi_t)).
\end{equation} 
The {\em volume-normalized homogeneous spinor flow} on $M$ is the system of ODEs on $\Sym_+^0(n,\bR)^H\times \Sigma_n^H$
given by
\begin{equation}\label{eq:normalizedhomspinflow}
(\dot{g}_t,\dot{\varphi}_t)=(\tilde{Q}_1(g_t,\varphi_t),\tilde Q_2(g_t,\varphi_t))
\end{equation} 
\end{definition}
For later applications, we discuss how one may compute $Q(\Phi)$ and $\tilde{Q}(\Phi)$ in practice. This discussion
will also allow us to define both homogeneous spinor flows for non-compact reductive homogeneous spaces.

Let us start with $Q(\Phi)$, i.e. $\Phi\in \mc{N}^G$, and more specifically with the computation of $Q_1(\Phi)$. Note that
$Q_1(\Phi)\in \Gamma(\odot^2 T^* M)$ is uniquely defined by $((Q_1(\Phi),\dot{g}))_g=-\left.\tfrac{d}{dt}\right|_{t=0} \cE(\Phi_t)$
for $t\mapsto \Phi_t\in \mathcal{N}$ being a horizontal curve with $\Phi_0=\Phi$ such that the associated
curve $t\mapsto g_t$ of Riemannian metrics fulfills $\dot{g}_0=\dot{g}$. In our setting, everything is $G$-invariant and
so it suffices to consider $\dot{g}\in  \Gamma(\odot^2 T^* M)^G\cong \Sym(n,\bR)^H$. For such a $\dot{g}$, we obtain
\begin{equation*}
\begin{split}
((h,\dot{g}))_g &=\int_M (h,\dot{g})_g\, \vol^g=\frac{1}{\det(A)} \int_M \sum_{i,j=1}^n h(AX_i,AX_j)\, \dot{g}(AX_i,A X_j)\, \vol^{\bar g}\\
&=\frac{1}{\det(A)}\sum_{i,j=1}^n h(AX_i,AX_j)\, \dot{g}(AX_i,A X_j)\\
&=\frac{1}{\det(A)} \sum_{i,j=1}^n (A^T h A)_{ij}\, (A^T \dot{g} A)_{ij}=\frac{1}{\det(A)} \tr(h g^{-1} \dot{g} g^{-1})
\end{split}
\end{equation*}
for any $h\in \Sym(n,\bR)^H\cong (\odot^2 \mfp^*)^H$. Now let $t\mapsto g_t \in \Sym_+(n,\bR)^H\cong (\odot_+^2 \mfp^*)^H$ be a smooth 
curve with $g_0=g$ and $\dot{g}_0=\dot{g}$. Then the horizontal curve $t\mapsto \Phi_t$ from above is given by
$t\mapsto [\tilde{A}_t,\varphi]$ for $(\tilde{A}_t)_{t\in I}$ being the horizontal lift of $(g_t)_t$ with $\tilde{A}_0=\tilde{A}$
and we obtain
\begin{equation}\label{eq:Q1hom}
\tr(Q_1(\Phi) g^{-1} \dot{g} g^{-1})=\det(A)((Q_1(\Phi),\dot{g}))_g=-\det(A)\left.\tfrac{d}{dt}\right|_{t=0} \cE([\tilde{A}_t,\varphi]).
\end{equation}
The computation for $Q_2(\Phi)$ is similar but much easier and one obtains
\begin{equation}\label{eq:Q2hom}
\langle Q_2(\Phi), \dot{\varphi}\rangle =-\det(A)\cdot\left.\tfrac{d}{dt}\right|_{t=0} \cE([\tilde{A},\varphi+t\dot{\varphi}]).
\end{equation}
for any $\dot{\varphi}\in (\varphi^{\perp})^H\subseteq \Sigma_n^H$. For $\Phi\in \mc{N}_1^G$, we have
and $\tilde{Q}_1(g,\varphi)=Q_1(g,\varphi)+\frac{n-2}{2n}\cE(g,\varphi)\, g$ and $\tilde{Q}_2(\Phi)=Q_2(\Phi)$
by equation \eqref{eq:tildeQ}.

At this point, note that all of the above spaces of $H$-invariant tensors and spinors and all of the above explicit formulas for the different quantities related to the homogeneous spinor flows on a compact homogeneous manifolds may also be defined for non-compact (reductive) homogeneous spaces. Hence, we arrive at the following definition:
\begin{definition}
Let $M=G/H$ be a non-compact reductive homogeneous space. Then we define the \emph{spinorial energy functional} by equation \eqref{spin_energy_homog}.\\
Moreover, for $\Phi=[\tilde{A},\varphi]\in \mc{N}^G$, we define
\begin{equation*}
Q(\Phi)=(Q_1(\Phi),Q_2(\Phi))\in \Sym(n,\bR)^H\times \Sigma_n^H
\end{equation*}
by equations \eqref{eq:Q1hom}, \eqref{eq:Q2hom} and then the {\em homogeneous spinor flow} by equation \eqref{eq:homspinflow}.
Furthermore, we set $\mc{N}_1^G:=\{[\tilde{A},\varphi]\in \mc{N}^G:\det(A)=1\}$, define for $\Phi=[\tilde{A},\varphi]\in \mc{N}_1^G$ the quantity
\begin{equation*}
\begin{split}
\tilde{Q}(\Phi)=(\tilde{Q}_1(\Phi),\tilde{Q}_2(\Phi))&\in T_{g} \Sym_+^0(n,\bR)^H\times \Sigma_n^H
\end{split}
\end{equation*}
by equations \eqref{eq:tildeQ} and then the {\em volume-normalized homogeneous spinor flow} by equation 
\eqref{eq:normalizedhomspinflow}.
\end{definition}
\begin{remark}
In the case of a compact reductive homogeneous space $G/H$, we took above a $G$-invariant background metric
$\bar{g}$ of unit volume $\vol(\bar{g})=1$. If we take instead a $G$-invariant background metric $\tilde{g}$
of some other volume $\vol(\tilde{g})=C>0$, then formula \eqref{spin_energy_homog} for the spinorial energy functional
$\cE$ gets multiplied by $C$ (surely, the energy of a $G$-invariant universal spinor is independent of the chosen background
metric but $A\in \GL_+(n)$ with $\Phi=[\tilde{A},\varphi]$ depends on the background metric). However,
this factor cancels out when computing the flow equations for the homogeneous spinor flow. Moreover, for the volume-normalized
flow, now normalized to volume equal to $C$, the additional term for $Q_1$ is $\frac{1}{C}\frac{n-2}{2n}\cE(g,\varphi)\, g$.
Thus, also in this case, the flow equations stay the same. This is why we do not require below that our chosen
background metric has volume equal to one leading to $\cE(\Phi)$ being correct only up to a positive constant multiple but correct flow equations.
\end{remark}
\begin{remark}
For a non-compact homogeneous space $G/H$ it is not clear, whether a solution of the homogeneous spinor flow also solves the partial differential equations derived in \cite{aww1}. However, if $G/H$ admits a cocompact lattice, the following non-rigorous argument can be made to support this hypothesis.

It is presumably possible to extend the existence and uniqueness theory for the spinor flow on closed manifolds to initial values with bounded geometry on any manifold. In that case the spinor flow would be defined by the partial differential equations \ref{eq:spinorflow}. Since the spinor flow is defined by a partial differential equation, it commutes with the action of a covering map, i.e.\@ if $M, \hat M$ are manifolds and $p: \hat M \to M$ is a covering map and $\Phi_t$ is a solution of the spinor flow on $M$, then $p^* \Phi_t$ is a solution of the spinor flow on $\hat M$.

Now presume that $G/H$ is a non-compact homogeneous space and assume that $\Lambda$ is a cocompact lattice. The space $\Lambda \backslash G/H$ is a compact {\em locally homogeneous space}. Denote by $\pi: G/H \to \Lambda \backslash G/H$ the canonical projection. This induces a pullback map
$$\pi^* : \Sigma (\Lambda \backslash G/H)  \to \Sigma G/H.$$
On the other hand, for $G$-invariant universal spinor fields, there is a push forward map
$$\pi_* : \Gamma(\Sigma G/H)^G \to \Gamma(\Sigma(\Lambda \backslash G/H)).$$
The image of $\pi_*$ can be considered to be the {\em locally invariant} sections of the universal spinor bundle. This set of locally invariant sections will be denoted by
$$\Gamma(\Sigma (\Lambda \backslash G/H))^{\operatorname{loc} G} = \pi_* \Gamma(\Sigma(\Lambda \backslash G/H)).$$
Now assume that $\Phi \in \Gamma(\Sigma (\Lambda \backslash G/H))^{\operatorname{loc} G}$ and let $\tilde \Phi = \pi^* \Phi$ be its pullback. Then the spinor flows $\Phi_t$ and $\tilde \Phi_t$ on $\Lambda \backslash G/H$ and $G/H$ respectively satisfy $\pi^* \Phi_t = \tilde \Phi_t$. Since $\tilde \Phi$ is $G$-invariant by definition, the solution $\tilde \Phi_t$ is also $G$-invariant for every $t$. This implies that $\Phi_t$ also remains locally invariant, i.e.\@ $\Phi_t \in \Gamma(\Sigma (\Lambda \backslash G/H))^{\operatorname{loc} G}$. Thus there is a well-defined notion of a {\em locally homogeneous spinor flow}. 

If $\Phi \in \Gamma(\Sigma(\Lambda \backslash G / H))$ is a universal spinor field, which lifts to an invariant universal spinor field $\tilde \Phi$ on $G/H$, then we can consider the spinor flows with initial condition $\Phi$ on $\Lambda \backslash G/H$ and $\tilde \Phi$ on $G/H$ respectively. Denote these spinor flows by $\Phi_t$ and $\tilde \Phi_t$. Then $\tilde \Phi_t = \pi^* \Phi_t$.

The calculations of the spinorial energy functional and gradient performed in the previous sections could be repeated for the set of locally invariant universal spinors $\Gamma(\Sigma (\Lambda \backslash G/H))^{\operatorname{loc} G}$. The calculations at a point would not change, only the terms involving the volume depend on the lattice $\Lambda$. However, the dependence on the volume cancels in the calculation of the negative gradient $(Q_1, Q_2)$, see the previous remark. Thus precisely the same formulas hold for the locally homogeneous spinor flow. The solution of the locally homogeneous flow is a solution of the negative gradient flow of the spinorial energy functional. Since $\Lambda \backslash G/H$ is a compact manifold, this implies that the solution also solves the partial differential equations \eqref{eq:spinorflow}.

Finally, let $\tilde \Phi \in \Gamma(\Sigma(G/H))^G$ and suppose that $\tilde \Phi_t$ is a solution of the {\em homogeneous spinor flow}. Then $\pi_* \tilde \Phi_t$ solves the {\em locally homogeneous spinor flow}, since the defining formulas coincide. On the other hand, then $\pi_* \tilde \Phi_t$ is also a solution of the spinor flow. It follows that $\pi^* \pi_* \tilde \Phi_t = \tilde \Phi_t$ also solves the partial differential equations \eqref{eq:spinorflow}.
\end{remark}
\section{The homogeneous spinor flow in dimension three}
In this section, we consider the spinor flow in dimension three. Dimension three is rather special, because the spinorial component of the solution only moves by parallel translation with respect to the Bourguignon-Gauduchon connection. This is due to the following theorem.
\begin{theorem}\label{th:homspinflowforn=3}
Let $M=G/H$ be a homogeneous manifold of dimension $3$ and let $\Phi=[\tilde{A},\varphi]\in \mc{N}^G$ be a $G$-invariant universal spinor
on $M$. Then
\begin{equation}\label{eq:energyforn=3}
\cE (\Phi)=\frac{1}{32 \det(A)}\sum_{i=1}^3 (c^2_{i12}(A)+c^2_{i13}(A)+c^2_{i23}(A))
\end{equation}
and, consequently, $Q_2(\Phi)=0$. Hence, the solution of the homogeneous spinor flow on $M$ with initial value $\Phi$ is of the form $I \ni t \mapsto [\tilde{A}_t,\varphi] \in \mc{N}^G$ for a horizontal curve $I\ni t \mapsto  \tilde{A}_t\in \widetilde{\GL}_+(3)$ with $\tilde{A}_0=\tilde{A}$ and the same applies for the solution of the homogeneous volume-normalized spinor flow if $\Phi\in \mc{N}_1^G$.
\end{theorem}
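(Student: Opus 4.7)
The plan is to reduce the whole theorem to the first claim \eqref{eq:energyforn=3}: once the energy is shown to depend only on $A$ and not on $\varphi$, the vanishing of $Q_2$ falls out of the variational characterisation \eqref{eq:Q2hom}, and the stated shape of the flow is then a direct consequence of Lemma \ref{le:trivialization}. So I would start from the general formula \eqref{spin_energy_homog} and prove that in dimension three all off-diagonal contributions $\langle E_j\cdot E_k\cdot \varphi,\, E_{j'}\cdot E_{k'}\cdot \varphi\rangle$, $(j,k)\ne (j',k')$, drop out of the pointwise square.

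The key computation is a short Clifford-algebra exercise on $\Sigma_3\cong \mathbb{C}^2$. For $j\ne k$ one has $(E_j\cdot E_k)^*=-E_j\cdot E_k$ and $(E_j\cdot E_k)^2=-1$, which gives $|E_j\cdot E_k\cdot\varphi|^2=|\varphi|^2=1$ for the diagonal terms. In dimension three the only ordered index pairs are $(1,2),(1,3),(2,3)$, and any two \emph{distinct} ones share exactly one index; using $E_m^2=-1$ together with the anti-commutation rules one reduces $E_j\cdot E_k\cdot E_{j'}\cdot E_{k'}$ to $\pm E_l\cdot E_m$ with $\{l,m\}$ the two uncommon indices, hence a skew-Hermitian operator, so $\langle E_j\cdot E_k\cdot\varphi,\, E_{j'}\cdot E_{k'}\cdot \varphi\rangle=\mp\langle\varphi, E_l\cdot E_m\cdot\varphi\rangle$ is purely imaginary. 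Since the coefficients $c_{ijk}(A)$ are real and $|\,\cdot\,|^2$ is automatically real, a cross-term and its conjugate partner in the double sum cancel pairwise, leaving only the diagonal contributions. Expanding (\ref{spin_energy_homog}) in the three bivectors then yields \eqref{eq:energyforn=3}.

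With \eqref{eq:energyforn=3} in hand, the right-hand side manifestly depends only on $A$ via the structure-constant-like quantities $c_{ijk}(A)$ and the factor $\det(A)^{-1}$, and not on $\varphi$. The variational formula \eqref{eq:Q2hom} therefore gives $\langle Q_2(\Phi),\dot\varphi\rangle=0$ for every admissible $\dot\varphi\in (\varphi^{\perp})^H$, whence $Q_2(\Phi)=0$. The system \eqref{eq:homspinflow} then reduces to a flow of the metric component only, while the spinorial component stays frozen in the Bourguignon--Gauduchon trivialisation of Lemma \ref{le:trivialization}; unwinding that trivialisation exactly says $\Phi_t=[\tilde A_t,\varphi]$ with $\tilde A_t$ the horizontal lift of $g_t$ starting at $\tilde A$. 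For the volume-normalised flow the identity $\tilde Q_2=Q_2=0$ from \eqref{eq:tildeQ} gives the same conclusion verbatim.

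The main (and really only) delicate point is the Clifford step: one must keep the sign conventions for $E_j\cdot$ consistent and, crucially, recognise that it is the \emph{imaginary} (rather than purely real) nature of $\langle\varphi, E_l\cdot E_m\cdot \varphi\rangle$ that is responsible for the cross-terms dying. An equivalent sanity-check uses the identification of $E_j\cdot E_k$ with (a multiple of) the Pauli matrices acting on $\mathbb{C}^2$, which makes transparent both the unit-norm property of $E_j\cdot E_k\cdot \varphi$ and the fact that $\langle\varphi,\sigma_l\varphi\rangle\in \mathbb{R}$ so that $\langle\varphi,iE_l\cdot E_m\cdot\varphi\rangle\in\mathbb{R}$, i.e.\ $\langle\varphi,E_l\cdot E_m\cdot\varphi\rangle\in i\mathbb{R}$; nothing deeper is required.
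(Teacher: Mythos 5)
Your proof is correct and follows essentially the same route as the paper: use the general formula \eqref{spin_energy_homog}, show that in dimension three the only cross-terms in the norm-squared expansion involve a pair of bivectors sharing exactly one index, argue that those cross-terms drop, deduce that $\cE$ depends only on the metric, read off $Q_2=0$ from \eqref{eq:Q2hom}, and invoke Lemma~\ref{le:trivialization} (and \eqref{eq:tildeQ} for the normalized flow) to conclude that only the metric part moves.

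The one place you genuinely add something is in how the cross-terms die. The paper simply asserts $\langle E_{j_1}\cdot E_{k_1}\cdot\varphi,\,E_{j_2}\cdot E_{k_2}\cdot\varphi\rangle=0$ for distinct ordered pairs (and, incidentally, its listed case distinction ``$j_1=j_2$ or $j_1=k_2$'' does not cover all pairs, e.g.\ $(1,2)$ vs.\ $(2,3)$; the correct statement is that the two pairs share exactly one index). You instead show that the Hermitian pairing reduces to $\mp\langle\varphi,E_l\cdot E_m\cdot\varphi\rangle$ with $E_l\cdot E_m$ skew-Hermitian, hence is purely imaginary, so that after pairing each cross-term with its conjugate partner in the double sum only $2\Re\langle\cdot,\cdot\rangle=0$ survives. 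This is the more careful version of the step: the paper's ``$=0$'' is accurate only if $\langle\cdot\,,\cdot\rangle$ is read as the real part of the Hermitian product; with the full Hermitian product the value is imaginary but generally nonzero, and it is precisely the realness of the coefficients $c_{ijk}(A)$ and of $|\cdot|^2$ that kills the cross-terms, exactly as you say. Everything downstream of the Clifford step in your argument — $Q_2(\Phi)=0$ from \eqref{eq:Q2hom}, the frozen spinor in the Bourguignon--Gauduchon trivialization, and the identical conclusion for $\tilde Q_2$ via \eqref{eq:tildeQ} — matches the paper.
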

\begin{proof}
Equation \eqref{eq:energyforn=3} follows directly from equation \eqref{spin_energy_homog} by noting that for any
 $(j_1,k_1)\neq (j_2,k_2)$ with $1\leq j_1<k_1\leq 3$, $1\leq j_2<k_2\leq 3$ we must have either $j_1=j_2$ or $j_1=k_2$ and so get
$\langle E_{j_1}\cdot E_{k_1}\cdot \varphi, E_{j_2} \cdot E_{k_2}\cdot \varphi\rangle=0$. The other statements then follow directly from 
equations \eqref{eq:Q2hom}, \eqref{eq:tildeQ} and Lemma \ref{le:trivialization}.
\end{proof}
\subsection{Spinor flow on three-dimensional unimodular Lie groups}
In this section, we consider the homogeneous spinor flow on three-dimensional unimodular Lie groups $G$, i.e.\@ we assume that $H=\{e\}$ and that $\tr(\ad(X))=0$ for all $X\in\mfg$. 

Note that by \cite{mil}, the associated simply-connected Lie groups are precisely those which admit cocompact lattices in dimension three. Hence all of them give rise to Thurston geometries, possibly by passing to the maximal geometry associated with the left-invariant metric, see \cite{scott}.

By the Bianchi classification \cite{bianchi}, the possible Lie algebras $\mfg$ are $\su(2)$, $\sL(2,\bR)$, $\mfe(2)$, $\mfe(1,1)$, 
$\mfh_3$ and $\bR^3$, where $\mfe(2)$ and $\mfe(1,1)$ are the Lie algebras of the group of motions of the Euclidean or the Minkowski plane, respectively, and $\mfh_3$ is the three-dimensional
Heisenberg group. Again by the Bianchi classification, we may choose a basis $X_1,X_2,X_3$ of $\mfg$ such that
\begin{equation}\label{eq:standardbasis}
[X_1,X_2]=\epsilon_3 X_3,\quad [X_2,X_3]=\epsilon_1 X_1\quad [X_3,X_1]=\epsilon_2 X_2
\end{equation}
for certain $\epsilon_1,\epsilon_2,\epsilon_3\in \{-1,0,1\}$. We will call such a basis a \emph{standard} basis for $\mfg$. Note that there are $27$ different choices
for $(\epsilon_1,\epsilon_2,\epsilon_3)$ and all of them correspond to one of the six unimodular three-dimensional Lie algebras mentioned above. So there are surely different values of $(\epsilon_1,\epsilon_2,\epsilon_3)$ corresponding
to the same Lie algebra and we restrict here to give one example of $(\epsilon_1,\epsilon_2,\epsilon_3)$ for each of the six three-dimensional unimodular Lie algebras. Note that we will use exactly that triple below when discussing each Lie algebra
individually:

We may choose $(\epsilon_1,\epsilon_2, \epsilon_3)=(1,1,1)$ for $\mfg=\mf{su}(2)$, 
 $(\epsilon_1,\epsilon_2, \epsilon_3)=(-1,1,1)$  for $\mfg=\mf{sl}(2,\bR)$,  $(\epsilon_1,\epsilon_2, \epsilon_3)=(1,1,0)$ for $\mfg=\mf{e}(2)$,  $(\epsilon_1,\epsilon_2, \epsilon_3)=(1,-1,0)$ for $\mfg=\mf{e}(1,1)$,
 $(\epsilon_1,\epsilon_2, \epsilon_3)=(1,0,0)$ for $\mfg=\mf{h}_3$ and $(\epsilon_1,\epsilon_2, \epsilon_3)=(0,0,0)$ for $\mfg=\bR^3$.

Now let us come to the homogeneous spinor flows on these Lie algebras $\mfg$. By Theorem \ref{th:homspinflowforn=3},
only the metric evolves and so we will consider these flows as flows for metrics $g$ on $\mfg$. Moreover,
we will consider both homogeneous spinor flows for initial metrics $g$ which are diagonal 
with respect to a standard basis $(X_1,X_2,X_3)$, i.e. $g=g_{a_1,a_2,a_3}:=\sum_{i=1}^3 a_i\, X^i\otimes X^i$ for certain 
$a_1,a_2,a_3\in (0,\infty)$ and show that then the solutions of both homogeneous spinor flows
stay diagonal with respect to the standard basis $(X_1,X_2,X_3)$. This is no restriction by the next lemma:

\begin{lemma}\label{le:diagonalbases}
Let $g$ be a Riemannian metric on a unimodular three-dimensional Lie algebra $\mfg$. Then $\mfg$ admits a $g$-orthogonal standard basis.
\end{lemma}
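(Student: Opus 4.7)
The plan is to adapt Milnor's classical normal form argument for left-invariant metrics on three-dimensional unimodular Lie groups. First, orient $\mfg$ and use $g$ together with this orientation to form the Hodge star $\Hodge:\Lambda^2\mfg\to\mfg$, equivalently the cross product $X\times Y := \Hodge(X\wedge Y)$. Since $\Hodge$ is an isomorphism in dimension three, the bracket $[\any,\any]:\Lambda^2\mfg\to\mfg$ factors uniquely as $[X,Y]=L(X\times Y)$ for a single endomorphism $L:\mfg\to\mfg$, which encodes the entire bracket structure of $\mfg$ relative to $g$.

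The key step is to show that $L$ is $g$-self-adjoint if and only if $\mfg$ is unimodular. The quickest route is to pick any $g$-orthonormal basis $(e_1,e_2,e_3)$, write structure constants $c^k_{ij}$ via $[e_i,e_j]=\sum_k c^k_{ij}e_k$, and express the matrix of $L$ in terms of the $c^k_{ij}$; the three off-diagonal symmetry relations $L_{ij}=L_{ji}$ then read precisely $\tr(\ad e_k)=0$ for $k=1,2,3$, i.e.\ the unimodularity condition. Granted self-adjointness, the spectral theorem supplies a $g$-orthonormal eigenbasis $(e_1,e_2,e_3)$ of $L$ with real eigenvalues $\lambda_1,\lambda_2,\lambda_3$, in which the brackets automatically take the cyclic form
\[
[e_2,e_3]=\lambda_1 e_1,\qquad [e_3,e_1]=\lambda_2 e_2,\qquad [e_1,e_2]=\lambda_3 e_3.
\]

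All that remains is to rescale: set $X_i:=\mu_i e_i$ for $\mu_i>0$ to be chosen. Positive rescaling preserves $g$-orthogonality (while breaking orthonormality) and transforms the structure constants to $\epsilon_k=\mu_i\mu_j\lambda_k/\mu_k$ for $\{i,j,k\}=\{1,2,3\}$, whose sign coincides with that of $\lambda_k$; one therefore only needs $|\epsilon_k|=1$ whenever $\lambda_k\neq 0$, as the case $\lambda_k=0$ is automatic. A short case analysis on the number of vanishing $\lambda_k$ suffices: if all three are nonzero, $\mu_i=|\lambda_j\lambda_k|^{-1/2}$ works; if exactly one vanishes, the remaining two constraints in three unknowns leave a one-parameter family of positive solutions; if two or three vanish the system is underdetermined. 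In every case the $\mu_i$ exist and produce a $g$-orthogonal standard basis. The only conceptually non-trivial step is the equivalence of unimodularity with self-adjointness of $L$; the rescaling analysis is purely elementary bookkeeping.
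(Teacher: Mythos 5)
Your proof is correct, but it takes a genuinely different and considerably more uniform route than the paper. The paper proves the lemma by exhausting the Bianchi classification: it treats $\mathbb{R}^3$, $\mathfrak{h}_3$, $\mathfrak{e}(2)$ and $\mathfrak{e}(1,1)$, $\mathfrak{su}(2)$, and $\mathfrak{sl}(2,\mathbb{R})$ one at a time, each with its own ad hoc argument (a variational argument on the circle for $\mathfrak{e}(2)$, $\mathfrak{e}(1,1)$; the principal axis theorem plus the identification $\mathrm{Ad}(\mathrm{SU}(2))\cong\mathrm{SO}(3)$ for $\mathfrak{su}(2)$; simultaneous diagonalization of $g$ with the Killing form plus $\mathrm{Ad}(\mathrm{SL}(2,\mathbb{R}))\cong\mathrm{SO}(1,2)^+$ for $\mathfrak{sl}(2,\mathbb{R})$). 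You instead reproduce Milnor's classical argument (essentially Lemma 4.1 of \cite{mil}, which the paper already cites for a different purpose): encode the bracket as $[X,Y]=L(X\times Y)$ via the $g$-cross product, observe that the three symmetry relations $L_{ij}=L_{ji}$ in a $g$-orthonormal basis are exactly the three equations $\tr(\ad e_k)=0$, invoke the spectral theorem to diagonalize the resulting self-adjoint $L$ in a positively oriented $g$-orthonormal basis, and rescale. This is shorter, avoids the classification entirely, and makes the role of unimodularity transparent in one stroke rather than being verified case by case. Two small points worth spelling out if you write this up: you need the $g$-orthonormal eigenbasis to be positively oriented so that the cross-product identities $e_i\times e_j=e_k$ hold (flip the sign of one eigenvector if necessary, which changes neither orthonormality nor the eigenvector property), and in the rescaling step the three log-linear equations in $(\log\mu_1,\log\mu_2,\log\mu_3)$ have coefficient matrix of determinant $4$, so a positive solution always exists when all $\lambda_k\neq 0$, while the degenerate cases are, as you say, underdetermined.
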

\begin{proof}
The assertion is obviously true for $\mfg=\bR^3$.

If $\mfg=\mf{h}_3$, then $\mf{z}(\mfg)=[\mfg,\mfg]$ and $[\mfg,\mfg]$ is one-dimensional. Hence, one may choose orthogonal $X_2,X_3\in [\mfg,\mfg]^{\perp}$. With $X_1:=[X_2,X_3]$ the basis $(X_1,X_2,X_3)$ is orthogonal and standard.

Next, let $\mfg\in \{ \mf{e}(2),\mf{e}(1,1)\}$. Then $[\mfg,\mfg]$ is two-dimensional and abelian. We choose some non-zero element
$X_3\in [\mfg,\mfg]^{\perp}$, set $f:=\ad(X_3)|_{[\mfg,\mfg]}$ and note that $f$ has to be trace-free. It suffices to show that
there is an orthogonal basis $X_1,X_2$ of $[\mfg,\mfg]$ such that $g(X_1,f(X_1))=g(X_2,f(X_2))=0$ as then we may scale
the vectors $X_1,X_2,X_3$ appropriately to obtain an orthogonal standard basis for $\mfg$:

To prove the existence of such an orthogonal basis of $[\mfg,\mfg]$, consider the smooth function 
$F:S^1\rightarrow \bR$, $F(X):=g(X,f(X))^2$ for $X\in S^1:=\left\{Y\in [\mfg,\mfg]:g(Y,Y)=1\right\}\linebreak\subseteq [\mfg,\mfg]$.
This function has a global minimum in some $X_1\in S^1\subset [\mfg,\mfg]$. Let $X_2\in S^1\subseteq[\mfg,\mfg]$ be such
that $(X_1,X_2)$ is an orthonormal basis of $[\mfg,\mfg]$. Then the trace-freeness of $f$ writes as
\begin{equation*}
g(X_1,f(X_1))+g(X_2,f(X_2))=\tr(f)=0
\end{equation*}
and the curve $\gamma:\bR\rightarrow S^1$, $\gamma(t):=\cos(t)X_1+\sin(t)X_2$ fulfills $\gamma(0)=X_1$,
$\dot{\gamma}(0)=X_2$ and $\ddot{\gamma}(0)=-X_1$. So $\tilde{F}:=F\circ \gamma$ has a global minimum in $0$, which implies that
\begin{align*}
0= \;\tilde{F}'(0) =& \;2 g(X_1,f(X_1))\cdot \Bigl(g(X_1,f(X_2))+g(X_2,f(X_1))\Bigr)\\
0\leq \tilde{F}''(0) =& \;2 \Bigl(g(X_1,f(X_2))+g(X_2,f(X_1))\Bigr)^2\\
& \qquad +4\, g(X_1,f(X_1))\cdot\Bigl(g(X_2,f(X_2)-g(X_1,f(X_1)\Bigr)\\
=& \; 2\Bigl(g(X_1,f(X_2))+g(X_2,f(X_1))\Bigr)^2-8\, g(X_1,f(X_1))^2,
\end{align*}
where we applied the trace-freeness of $f$ in the last step. The first equation gives us that
$g(X_1,f(X_1))=0$ or $g(X_1,f(X_2))+g(X_2,f(X_1))=0$. In the first case, the trace-freeness
of $f$ gives us that then also $g(X_2,f(X_2))=0$ and we are done. In the other case, the obtained inequality
from $\tilde{F}''(0)\geq 0$ reduces to $0\leq -8  g(X_1,f(X_1))^2$ and so we again have $g(X_1,f(X_1))=0$
and then $g(X_2,f(X_2))=0$. 

Next, let $\mfg=\su(2)$ and take a standard basis $(X_1,X_2,X_3)$ of $\su(2)$ for $(\epsilon_1,\epsilon_2, \epsilon_3)=(1,1,1)$.
Then the group of inner automorphisms of the Lie algebra $\su(2)$ is given by
$\mathrm{Ad}(\SU(2))=\SU(2)/Z(\SU(2))=\SU(2)/\{\pm 1\}=\mathrm{SO}(3)$ and if one identifies $\su(2)$ with $\bR^3$ via the standard basis
$(X_1,X_2,X_3)$, the inner automorphism group $\mathrm{SO}(3)$ acts on $\bR^3$ via its standard representation.
Hence, the assertion for $\mfg=\su(2)$ follows from the principal axis theorem.

Finally, let $\mfg=\mf{sl}(2,\bR)$ and take now a standard basis of $\mf{sl}(2,\bR)$ for
$(\epsilon_1,\epsilon_2, \epsilon_3)=(-1,1,1)$. If we identify $\mf{sl}(2,\bR)$ with $\bR^3$ via that basis, the Killing form
of $\mf{sl}(2,\bR)$ equals (up to a constant multiple) the Lorentz metric $\langle\cdot\,,\cdot\rangle_{1,2}$ on $\bR^3$. As the group
of inner automorphisms $\mathrm{Ad}(\SL(2,\bR))$ of $\mf{sl}(2,\bR)$ preserves the Killing form, $\mathrm{Ad}(\SL(2,\bR))$ may be identified
with a subgroup of $\mathrm{O}(1,2)$ and as $\mathrm{Ad}(\SL(2,\bR))$ is connected, it is easy to see that actually $\mathrm{Ad}(\SL(2,\bR))\cong \SO(1,2)^+$.
Hence, each (oriented, time-oriented) orthonormal basis of $\bR^{1,2}$ corresponds to a standard basis of $\mf{sl}(2,\bR)$ and so we need to find
such a basis under which $g$ is diagonal to prove the assertion. To that end, we diagonalize $\langle \cdot \,, \cdot \rangle_{1,2}$ with respect to $g$, i.e.\ we choose a $g$-orthonormal basis $(X_1,X_2,X_3)$ which is orthogonal with respect to $\langle \cdot \,, \cdot \rangle_{1,2}$. By suitably rescaling and possibly reordering this basis, we obtain a $g$-orthogonal standard basis of $\mf{sl}(2,\bR)$.
This finishes the proof.
\end{proof}

Now let $(X_1,X_2,X_3)$ be a standard basis for some three-dimensional unimodular Lie algebra $\mfg$ and let $\bar{g}$ the 
left-invariant metric for which $(X_1,X_2,X_3)$ is an orthonormal basis. Use the basis $(X_1,X_2,X_3)$ to identify $\mfg$ 
with $\bR^3$ and note that then
\begin{equation*}
[X,Y]=D (X\times Y)
\end{equation*}
for $X,Y\in \mfg$, where $D:=\diag(\epsilon_1,\epsilon_2,\epsilon_3)$ and $\times$ is the usual cross product on $\bR^3$. Now $(AX)\times (AY)= \det(A)\cdot A^{-T} (X\times Y)$ for any $X,Y\in \bR^3$
and so
\begin{equation*}
\begin{split}
\bar g(A^{-1}[A X,AY],Z)&=\bar g(A^{-1} D(AX\times A Y),Z)=\bar g\left(\det(A)\, A^{-1}DA^{-T}(X\times Y),Z\right)\\
&=\bar g\left(X\times Y,\det(A)\, A^{-1}DA^{-T} Z\right)
\end{split}
\end{equation*}
for all $X,Y,Z\in \mfg$. We set $B:=A^{-1}DA^{-T}$ and obtain
\begin{equation*}
\begin{split}
c_{i\, i+1\, i+2}(A)&=-\det(A)\cdot\left(\bar g(X_{i+2}\times  X_i, B X_{i+1})-\bar g(X_{i+1}\times  X_{i+2}, B X_i)\right.\\
&\quad \left.-\bar g(X_{i+1}\times X_i, B X_{i+2})\right)\\
&=\det(A)\cdot(b_{i\,i}-b_{i+1\, i+1}-b_{i+2\, i+2}) \\
c_{i\, i+2\, i+1}(A)&=-c_{i\, i+1\, i+2}(A) =\det(A)\cdot (b_{i+1\, i+1}+b_{i+2\, i+2}-b_{i\,i})\\
c_{i\, i\, i+1}(A)&=-c_{i\, i+1\, i}(A)=2 b_{i+2\, i}\,\det(A),\\
c_{i\, i\, i+2}(A)&=-c_{i\, i+2\, i}(A)=-2 b_{i+1\, i}\,\det(A).
\end{split}
\end{equation*}
for any $i=1,2,3$, where we compute the indices cyclically. Hence, equation \eqref{eq:energyforn=3} gives us
\begin{equation*}
\begin{split}
\cE (\Phi)&=\frac{1}{32\det(A)}\,\sum_{i=1}^3 (c^2_{i12}(A)+c^2_{i13}(A)+c^2_{i23}(A))\\
&=\frac{\det(A)}{8}\,\sum_{\stackrel{i,j=1,}{i\neq j}}^3 b_{ij}^2 + 3(b_{11}^2+b_{22}^2+b_{33}^2)-2( b_{11} b_{22}+b_{11} b_{33}+b_{22} b_{33})
\end{split}
\end{equation*}
Now let $\Phi$ be such that $g:=\diag(a_1,a_2,a_3)$ for certain $a_1,a_2,a_3\in (0,\infty)$. We may choose
$\tilde{A}$ with $A^{-1}=\diag(\sqrt{a_1},\sqrt{a_2},\sqrt{a_3})$.

We first show that $Q_1(g)$ has to be diagonal, which then implies that $\tilde{Q}_1(g)$ is diagonal as well. To show this, we convince ourselves that for any $\dot{g}\in \Sym^2(3,\bR)$ which is off-diagonal, one has $\left.\tfrac{d}{dt}\right|_{t=0} \cE(\Phi_t)=0$
for $\Phi_t$ being the horizontal lift of a curve $I\ni t\mapsto g_t\in \Sym_+^2(n,\bR)$ with $g_0=g$ and $\dot{g}_0=\dot{g}$.
Without loss of generality,
we may assume that $\dot{g}=\left(\begin{smallmatrix} 0 & 1 & 0 \\ 1 & 0 & 0 \\ 0 & 0 & 0 \end{smallmatrix}\right)$.
The other cases are similar. We set
\begin{equation*}
A_t:=\begin{pmatrix} \tfrac{4 \sqrt{a_1} a_2}{4 a_1 a_2-t^2} & -\tfrac{ 2t \sqrt{a_2}}{4 a_1 a_2-t^2} & 0 \\ -\tfrac{ 2t \sqrt{a_1}}{4 a_1 a_2-t^2} & \tfrac{4 \sqrt{a_2} a_1}{4 a_1 a_2-t^2} & 0 \\
0 & 0 & \tfrac{1}{\sqrt{a_3}} \end{pmatrix}=\begin{pmatrix} \sqrt{a_1} & \tfrac{t}{2\sqrt{a_1}} & 0 \\ \tfrac{t}{2\sqrt{a_2}} &\sqrt{a_2} & 0 \\
0 & 0 & \sqrt{a_3} \end{pmatrix}^{-1}
\end{equation*}
Then one gets that
\begin{equation*}
g_t:=A_t^{-T} A_t^{-1}=\begin{pmatrix} \tfrac{4 a_1 a_2+t^2}{4 a_2} & t & 0 \\ t & \tfrac{4 a_1 a_2+t^2}{4 a_1} & 0 \\
0 & 0 & a_3
 \end{pmatrix}
\end{equation*}
fulfils $g_0=g$ and $\dot{g}_0=\dot{g}$. Moreover, $A_0=A$ and
\begin{equation*}
A_t^{-1}\, \dot{A}_t=\begin{pmatrix} \tfrac{t}{4 a_1 a_2-t^2} & -\tfrac{ 2\sqrt{a_1 a_2}}{4 a_1 a_2-t^2} & 0 \\  -\tfrac{ 2\sqrt{a_1 a_2}}{4 a_1 a_2-t^2} & \tfrac{t}{4 a_1 a_2-t^2} & 0 \\
0 & 0 & 0\end{pmatrix} 
\end{equation*}
i.e. $A_t^{-1}\, \dot{A}_t$ is symmetric for all $t\in I$. Hence, $I\ni t\mapsto A_t\in \GL_+(n)$ is a horizontal lift as desired. Moreover,
if $\Phi_t=[\tilde{A}_t,\varphi]$ is the horizontal lift with $\tilde{A}_0=\tilde{A}$ such that $A_t$ is the image of $\tilde{A_t}$ under the universal cover $\widetilde{\GL}_+(n)\rightarrow \GL_+(n)$,
then we have
\begin{equation*}
B_t:=A_t^{-1} D A_t^{-T}=\begin{pmatrix} \tfrac{4 a_1^2 \epsilon_1+t^2\epsilon_2}{4a_1} & \tfrac{t(a_1 \epsilon_1+a_2 \epsilon_2)}{2 \sqrt{a_1 a_2}}  & 0 \\
\tfrac{t(a_1 \epsilon_1+a_2 \epsilon_2)}{2 \sqrt{a_1 a_2}} & \tfrac{4 a_2^2 \epsilon_2+t^2\epsilon_1 }{4a_2} & 0 \\
0 & 0 & a_3 \epsilon_3
\end{pmatrix}. 
\end{equation*}
and one sees that $\cE (\Phi_t)$ is a rational function in $t$ with no linear terms in the numerator as well as in the denominator.
This implies that $\left.\tfrac{d}{dt}\right|_{t=0} \cE(\Phi_t)=0$ and so that $Q_1(\Phi)$ and $\tilde{Q}_1(\Phi)$
both have no off-diagonal part.

To compute $Q_1(\Phi)_{11}$, note that a horizontal curve $I\ni A_t\in \GL_+$ with $A_0=A$ and $g_t:=A_t^{-T} A_t^{-1}$ fulfilling $\dot{g}_0=g E_{11}g=a_1^2 E_{11}$ is given by
\begin{equation*}
A_t^{-1}:=\diag(\sqrt{(1+a_1 t)a_1},\sqrt{a_2},\sqrt{a_3}).
\end{equation*}
Hence, $B_t=\diag((1+a_1 t)\epsilon_1 a_1 , \epsilon_2 a_2 , \epsilon_3 a_3 )$ and
$\det(A_t)=((1+a_1 t) a_1 a_2 a_3)^{-\tfrac{1}{2}}$.
We set now $b_i:=\epsilon_i a_i$ and obtain $\left.\tfrac{d}{dt}\right|_{t=0} b_{11}^2(t)=2 a_1 b_1^2$, $\left.\tfrac{d}{dt}\right|_{t=0} b_{11}b_{ii}=a_1 b_1 b_i$ for $i=2,3$
and $\left.\tfrac{d}{dt}\right|_{t=0} \det(A_t)=-\tfrac{a_1}{2} \det(A)$ and so
\begin{equation*}
Q_1(\Phi)_{11}=-\det(A)\left.\tfrac{d}{dt}\right|_{t=0} \cE(\Phi_t)=-a_1\tfrac{9 b_1^2-3(b_2^2+b_3^2)+2 b_2 b_3-2(b_1 b_2+b_1 b_3)}{64 a_1 a_2 a_3}
\end{equation*}
by equation \eqref{eq:Q1hom}. The computation of $Q_1(\Phi)_{ii}$ for $i=2,3$ is similar and one obtains that the homogeneous spinor flow is given by the following system of ODEs
\begin{equation*}
\dot{a_i}=-\frac{a_i \left(9 b_i^2-3 (b_{i+1}^2+b_{i+2}^2)
+2 (b_{i+1} b_{i+2} -b_i b_{i+1} -b_i b_{i+2}) \right)}{64 a_1 a_2 a_3},\quad i=1,2,3.
\end{equation*}
Moreover, using equation $\eqref{eq:tildeQ}$, we get that the volume-normalized homogeneous spinor flow is given by the following system 
of ODEs
\begin{equation*}
\dot{a_i}=-\frac{a_i \left(8 b_i^2-4 (b_{i+1}^2+b_{i+2}^2)
+\frac{4}{3} (2 b_{i+1} b_{i+2} -b_i b_{i+1} -b_i b_{i+2}) \right)}{64 a_1 a_2 a_3},\quad i=1,2,3,
\end{equation*}
where we note that this system is, in fact, only two-dimensional as $a_1 a_2 a_3=\det(A)^{-\tfrac{1}{2}}=1$.

\medskip

\noindent We have a closer look at the different possible cases:

\medskip
\begin{itemize}
\item[(i)]
First of all, we consider the cases when at least one $\epsilon_i$ is zero. Of course, if all $\epsilon_i$ are zero, i.e. $\mfg=\bR^3$, then 
all evolutions are trivial.

\medskip

\item[(ii)]
Let us now look at the case that exactly one of the $\epsilon_i$ is not zero. Without loss of generality, we
may assume that $\epsilon_1=1$ and $\epsilon_2=\epsilon_3 = 0$. Then $\mfg$ is the three-dimensional Heisenberg algebra $\mfh_3$, $b_1=a_1$
and $b_2=b_3=0$.
So the homogeneous spinor flow is given by
\begin{equation*}
\dot{a}_1=-\frac{9 a_1^2}{64 a_2 a_3},\qquad \dot{a}_2=\frac{3 a_1}{64 a_3},\qquad \dot{a}_3=\frac{3 a_1}{64 a_2}.
\end{equation*}
For the initial values $a_1(0)=a_1^0$, $a_2(0)=a_2^0$, $a_3(0)=a_3^0$, the solution is given by
\begin{equation*}
\begin{split}
a_1(t)&=a_1^0\, \left(\frac{15\, a_1^0}{64\, a_2^0 a_3^0}\,t+1\right)^{-\tfrac{3}{5}},\qquad
a_2(t)=a_2^0\, \left(\frac{15\, a_1^0}{64\, a_2^0 a_3^0}\,t+1\right)^{\tfrac{1}{5}},\\
a_3(t)&=a_3^0\, \left(\frac{15\, a_1^0}{64\, a_2^0 a_3^0}\, t+1\right)^{\tfrac{1}{5}}.
\end{split}
\end{equation*}
Moreover, the volume-normalized homogeneous spinor flow is given by
\begin{equation*}
\dot{a}_1=-\frac{a_1^2}{8 a_2 a_3},\qquad \dot{a}_2=\frac{a_1}{16 a_3},\qquad \dot{a}_3=\frac{a_1}{16 a_2},
\end{equation*}
whose solution with the initial values $a_1(0)=a_1^0$, $a_2(0)=a_2^0$, $a_3(0)=a_3^0$ fulfilling
$a_1^0 a_2^0 a_3^0=1$ equals
\begin{equation*}
\begin{split}
a_1(t)&=a_1^0\, \left(\frac{a_1^0}{4\, a_2^0 a_3^0}\, t+1\right)^{-\tfrac{1}{2}},\qquad
a_2(t)=a_2^0\, \left(\frac{a_1^0}{4\, a_2^0 a_3^0}\,t+1\right)^{\tfrac{1}{4}},\\
a_3(t)&=a_3^0\, \left(\frac{a_1^0}{4\, a_2^0 a_3^0}\,t+1\right)^{\tfrac{1}{4}}.
\end{split}
\end{equation*}
Note that by Lemma \ref{le:diagonalbases}, we have determined all solutions of both homogeneous spinor flows on $\mfh_3$.
\medskip

\item[(iii)]
Now assume that exactly two $\epsilon_i$ are non-zero. Without loss of generality, we may assume that $\epsilon_1=1$ and $\epsilon_3=0$ 
and set $\epsilon:=\epsilon_2 \in \{-1,1\}$. Note that for $\epsilon=1$, we have $\mfg=\mfe(2)$, whereas for $\epsilon=-1$
we get $\mfg=\mfe(1,1)$. To simplify the equations, we assume that the initial values for $a_1$ and $a_2$ are the same. Then we have 
$a_1\equiv a_2$ in both cases. Setting $x:=a_1=a_2$, $y:=a_3$, the homogeneous spinor flow is given by
\begin{equation*}
\dot{x}=\frac{\epsilon-3}{32}\frac{x}{y},\qquad \dot{y}=\frac{3-\epsilon}{32}
\end{equation*}
and the volume-normalized homogeneous spinor flow is given by
\begin{equation*}
\dot{x}=\frac{\epsilon-3}{48}\frac{x}{y},\qquad \dot{y}=\frac{3-\epsilon}{24}
\end{equation*}
The solution of the homogeneous spinor flow with inital value \mbox{$x(0)=x_0$}, $y(0)=y_0$ is given by
\begin{equation*}
x(t)=\frac{x_0}{1+\frac{3-\epsilon}{32 y_0}\, t},\quad y(t)=y_0+\frac{3-\epsilon}{32}\, t
\end{equation*}
and the solution of the volume-normalized homogeneous spinor flow with the same initial value fulfilling $x_0^2 y_0=1$
is given by
\begin{equation*}
x(t)=x_0\cdot \left(1+\frac{3-\epsilon}{24 y_0} t\right)^{-\frac{1}{2}},\quad y(t)=y_0+\frac{3-\epsilon}{24} t.
\end{equation*}
So in all the considered non-abelian cases here and both for the non-normalized and the normalized spinor flow, the maximal interval of 
existence $(T_-,T_+)$ fulfills $T_->-\infty$ and $T_+=\infty$ and at each boundary point some directions blow up and the others collapse
and the ones which collapse at one point blow up at the other and vice versa.

\medskip

\item[(iv)]
Now we assume that $\epsilon_1=\epsilon_2=\epsilon_3=1$. Then the Lie algebra is $\mfg=\su(2)$. The Lie group $\SU(2)$ is diffeomorphic to $S^3$ and the left invariant metric $a_1 = a_2 = a_3$ corresponds to a round sphere, whereas $a_1 = a_2 \neq a_3$ corresponds to a Berger sphere, with fiber length proportional to $a_3$. Notice that since $\epsilon_i = 1$, we have $b_i=a_i$ for $i=1,2,3$.
We try to find solutions of the homogeneous spinor flow with initial value being a Berger sphere, i.e. $a_1(0)=\frac{\epsilon^2}{4}$, $a_2(0)=a_3(0)=\frac{1}{4}$.
Then the symmetry in the above equations shows that $a_2(t)=a_3(t)$ for all $t\in I$. We set $x:= 4 a_1$ and $y:=4 a_2=4 a_3$ and obtain the following initial
value problem:
\begin{align*}
  \dot{x} & =-\frac{9x^2}{16 y^2}+\frac{1}{4}+\frac{x}{4y}, &  x(0) = \epsilon^2\\
  \dot{y} & =\quad\frac{3x}{16 y}- \frac{y}{4x}, & y(0) = 1
\end{align*}
The spinor flow on Berger spheres has been analyzed by Wittmann in \cite{wittm} by constructing adapted initial spinor fields and calculating the terms $Q_1$ and $Q_2$ explicitly. The system above is exactly the same as Wittmann obtains in \cite[Lemma 8]{wittm} for $\mu=-\frac{a}{4}$ and $a\in \{-2,2\}$ arbitrary.
In particular, we see that for $\epsilon=1$, the solution is given by $x(t)=y(t)=1-\frac{t}{16}$.
Moreover, the volume-normalized spinor flow with initial value as above is given by
\begin{align*}
  \dot{x} & =-\frac{x^2}{2y^2}+\frac{1}{3}+\frac{x}{6y}, &  x(0) = \epsilon^2 \\
  \dot{y} & =\quad\frac{x}{4y}- \frac{1}{12}-\frac{y}{6 x}, &  y(0)=1.
\end{align*}
These are again exactly the equations that Wittmann gets in the proof of his stability result \cite{wittm} and so any solution has $T_+=\infty$ and converges for $t\rightarrow \infty$ and any $\epsilon>0$ to a Killing spinor on $S^3$ by \cite{wittm}.

We come now back now to the case with arbitrary $a_1,a_2,a_3$.
If we linearize then the volume-normalized flow equations at $(a_1,a_2,a_3)=(1,1,1)$, we obtain the system
\[
\begin{pmatrix}
	\dot \alpha_1 \\ \dot \alpha_2 \\ \dot \alpha_3
\end{pmatrix}
= - \frac{5}{48}
\begin{pmatrix}
	2 & -1 & -1 \\
	-1 & 2 & -1 \\
	-1 & -1 & 2
\end{pmatrix}
\begin{pmatrix}
	 \alpha_1 \\ \alpha_2 \\  \alpha_3
\end{pmatrix}.
\]
The coefficient matrix has eigenvalue $0$ corresponding to the scaling direction $(1,1,1)$ and the double eigenvalue $-\frac{5}{16}$ 
orthogonal to it, implying first of all stability of the volume-normalized flow on the space of left-invariant metrics of fixed volume which are diagonal with respect to the basis $X_1,X_2,X_3$ from above around the Killing spinor metric $\bar{g}$. Hence, we have a ball $B_{R}(1,1,1)$ of some radius $R>0$ in $\bR_+^3$ around $(1,1,1)$ such that for each initial value $g_{\tilde{a}_1,\tilde{a}_2,\tilde{a}_3}$ with $(\tilde{a}_1,\tilde{a}_2,\tilde{a}_3)\in B_{R}(1,1,1)$
and $\tilde{a}_1\tilde{a}_2\tilde{a}_3=1$ the volume-normalized homogeneous spinor flow converges for $t\rightarrow \infty$ to the round metric $g_{1,1,1}=\bar{g}$ on $\SU(2)\cong S^3$ (actually, $\bar{g}$ is the round metric only up to a positive multiple).

Let now $g$ be any left-invariant metric of the same volume as $\bar{g}$ in the ball of radius $R$ around $g_{1,1,1}$ in
$\Sym_+(3,\bR)\cong \odot^2_+ \mf{su}(2)$. By Lemma \ref{le:diagonalbases}, we know that there is a standard basis
$(\tilde{X}_1,\tilde{X}_2,\tilde{X}_3)$ for which $g$ is diagonal, i.e.\@ $g=\sum_{i=1}^3  \tilde{a}_i \tilde{X}^i\otimes \tilde{X}^i$.
As $(\tilde{X}_1,\tilde{X}_2,\tilde{X}_3)$ and $(X_1,X_2,X_3)$ are related to each other by an element of $\mathrm{SO}(3)$, the metric 
for which $(\tilde{X}_1,\tilde{X}_2,\tilde{X}_3)$ is orthonormal equals $\bar{g}$ and so still the distance of $g$ to $\bar{g}$ is 
less than $R$, i.e.\@ $(\tilde{a}_1,\tilde{a}_2,\tilde{a}_3)\in B_{R}(1,1,1)\subseteq \bR_+^3$. Hence, by the above, also the 
volume-normalized homogeneous spinor flow with initial value $g$ converges for $t\rightarrow \infty$ to the Killing spinor 
$\bar{g}$, i.e. the Killing spinor metric $\bar{g}$ is a stable point in the space of \emph{all} left-invariant metrics of fixed volume.

\medskip

\item[(v)]
Next, we consider $\epsilon_1=-1$, $\epsilon_2=\epsilon_3=1$. Then $\mfg=\sL(2,\bR)$ and  $b_1=-a_1$, $b_2=a_2$ and $b_3=a_3$. Again,
assume that $a_2=a_3$ and set $x:= 4 a_1$ and $y:=4 a_2=4 a_3$. Then the homogeneous spinor flow is given by
\begin{equation*}
\dot{x}=-\frac{9x^2}{16 y^2}+\frac{1}{4}-\frac{x}{4y},\qquad \dot{y}=\frac{3x}{16 y}- \frac{y}{4x}.
\end{equation*}
If the initial values $x(0)=x_0$, $y(0)=y_0$ fulfill $y_0=\frac{3}{2} x_0$, we get the soliton solution
\begin{equation*}
x(t)=x_0-\frac{1}{6} t,\quad y(t)=\frac{3}{2} x(t)=\frac{3}{2} x_0-\frac{1}{4} t.
\end{equation*}
Note that the soliton is not a twistor, and so also not a Killing spinor as
$\nabla_{e_1} \varphi=\frac{4}{3\sqrt{x_0}} e_1\cdot \varphi$ and $\nabla_{e_j} \varphi=-\frac{1}{3\sqrt{x_0}} e_j\cdot \varphi$ for $j=2,3$. Note that in contrast to the case $\mfg=\mf{su}(2)$, here also the metric part of the soliton solution is not unique. 
In fact, for any standard basis $(\tilde{X}_1,\tilde{X}_2,\tilde{X}_3)$ of $\mathfrak{sl}(2,\bR)$, we get a soliton solution
$\tilde{g}=\tfrac{2}{3}\tilde{X}^1\otimes \tilde{X}^1+\tilde{X}^2\otimes \tilde{X}^2+\tilde{X}^3\otimes \tilde{X}^3$. So we have a
$\mathrm{SO}(1,2)^+$-orbit of these spinor soliton metrics and we get the same metric precisely when we apply an element of the subgroup $\mathrm{SO}(1,2)^+\cap \mathrm{SO}(3)=\{1\}\times \mathrm{SO}(2)\cong \mathrm{SO}(2)$. Thus, there is a two-dimensional space of left-invariant soliton metrics parameterized by the homogeneous space $\mathrm{SO}(1,2)^+/\mathrm{SO}(2)$.

The normalized homogeneous spinor flow is given by
\begin{equation*}
\dot{x}=-\frac{x^2}{2y^2}+\frac{1}{3}-\frac{x}{6y},\qquad \dot{y}=\frac{x}{4 y}+ \frac{1}{12}-\frac{y}{6x}.
\end{equation*}
As the normalization condition reads $xy^2=1$, the system reduces to
\begin{equation*}
\dot{x}=-\left(\frac{x^3}{2}-\frac{1}{3}+\frac{x^{\frac{3}{2}}}{6}\right)=-\frac{1}{6}\left(3 x^3+x^{\frac{3}{2}}-2\right).
\end{equation*}
It is easy to see that $\dot{x} >0$ if $x < (2/3)^{2/3}$, $\dot{x} = 0$ if $x = (2/3)^{2/3}$, and $\dot{x} < 0$ if $x > (2/3)^{2/3}$. This implies that $T_+=\infty$ and
\begin{equation*}
\lim\limits_{t\rightarrow \infty} x(t)=\left(\frac{2}{3}\right)^{\frac{2}{3}}=:x_{\infty},\quad
\lim\limits_{t\rightarrow \infty} y(t)=\frac{1}{x_{\infty}^{\frac{1}{2}}}=\left(\frac{2}{3}\right)^{-\frac{1}{3}}
=\frac{3}{2} x_{\infty}. 
\end{equation*}
So the solution converges for $t\rightarrow \infty$ to the soliton solution.

Moreover, if we linearize the volume normalized flow equation at the critical point $(a_1,a_2,a_3)=\left(\frac{2}{3},1,1\right)$, we obtain the system
\[
\begin{pmatrix}
	\dot \alpha_1 \\ \dot \alpha_2 \\ \dot \alpha_3
\end{pmatrix}
= - \frac{5}{288}
\begin{pmatrix}
	-12 & 4 & 4 \\
	9 & -21 & 15 \\
	9 & 15 & -21
\end{pmatrix}
\begin{pmatrix}
	 \alpha_1 \\ \alpha_2 \\  \alpha_3
\end{pmatrix}.
\]
The coefficient matrix has eigenvalue $0$ corresponding to the scaling direction $\left(\frac{2}{3},1,1\right)$ and the negative 
eigenvalues $-\frac{5}{16},-\frac{5}{8}$, implying stability of the volume normalized flow on the space of metrics of fixed ``volume'' 
which are diagonal with respect to the basis $X_1,X_2,X_3$ from above. Note that by similar arguments as in case (iv) any metric $g$ 
sufficiently near $\bar{g}$ flows via the volume-normalized homogeneous spinor flow for $t\rightarrow \infty$ to a spinor soliton metric 
$\tilde{g}$ with respect to which $g$ is diagonal. However, as pointed out above, $\tilde{g}$ is, in general, different from $\bar{g}$.
\end{itemize}
\section{Spinor flow on almost abelian Lie groups}

In this section, we consider the spinor flow on \emph{almost abelian} Lie groups $G$, i.e. Lie groups having an abelian codimension one 
normal subgroup. \emph{Almost abelian} Lie algebras $\mfg$, i.e. Lie algebras of almost abelian Lie groups, are thus characterized by 
the existence of a codimension one abelian ideal $\mfu$ and the entire Lie bracket is encoded in one endomorphism $f:=\ad(Y)|_{\mfu}$ of 
$\mfu$ for $Y\in \mfg\setminus \mfu$. Note that for any other $Y'\in \mfg\setminus \mfu$ there exists some $\lambda\in \bR^*$ such that $f'=\ad(Y')|_{\mfu}=\lambda f$, namely the unique element $\lambda\in \bR^*$
with $Y'-\lambda Y\in \mfu$. Note that all three-dimensional Lie algebras except the two simple ones are almost abelian and so this 
subsection can be seen as a generalization of certain parts of the last subsection.

Below, we will also be interested in almost abelian Lie algebras of dimension four. There are plenty of examples of these Lie algebras in dimension four: First of all, there are the direct sums of a three-dimensional almost abelian Lie algebra with $\bR$, namely the unimodular ones $\bR^4$, $\mfh_3\oplus \bR$, $\mf{e}(2)\oplus \bR$, $\mf{e}(1,1)\oplus \bR$, the non-unimodular Lie algebra $\mf{aff}(\bR)\oplus \bR^2$, $\mf{aff}(\bR)$ being the two-dimensional Lie algebra of affine motions of the real line, one more single non-unimodular example and two more $1$-parameter families of non-unimodular examples, cf. \cite{bianchi}. Moreover, there are in total three single (including one nilpotent example), one $1$-parameter family and two $2$-parameter families of non-decomposable almost abelian Lie algebras of dimension four, cf. \cite{mu}.

Note further that other similar geometric flows on almost abelian Lie algebras were already studied before in the literature. First of all, Lauret \cite{lau} studied in detail the Laplacian flow on seven-dimensional almost abelian Lie algebras obtaining many interesting results for that flow including that the maximal interval of existence is always of the form $(T_-,\infty)$ with $-\infty <T_-<0$ and a classification of all (semi-)algebraic Laplacian solitons on these Lie algebras. Moreover,
Bagaglini and Fino studied in \cite{bf} the Laplacian coflow on these Lie algebras and showed that, under additional assumptions,
the Laplacian coflow on these Lie algebras is ancient and has finite $T_+$ and obtained new examples of soliton solutions for that flow.

Coming back to the spinor flow, let $\Phi=(g,\varphi)\in F_n$ be a universal spinor field and fix a basis $X_1,\ldots, X_n$ of $\mfg$ 
such that $X_1,\ldots,X_{n-1}$ is a basis of $\mfu$ and $X_n$ is perpendicular to $\mfu$ with respect to $g$.
Denote by $F\in \bR^{(n-1)\times (n-1)}$ the matrix representing $\ad(X_n)|_{\mfu}\in \End(\mfu)$ with respect to this basis and choose
$\bar g$ in such a way that $X_1, \ldots, X_n$ is a orthonormal basis with respect to $\overline g$. Then $g=\left(\begin{smallmatrix} H &  \\ & h\end{smallmatrix}\right)$ for certain $H\in \Sym_+(n-1,\bR)$
and $h\in \bR_+$. Similarly, we may decompose $A^{-1}\in \GL_+(n)$ with $A^{-T}A^{-1}=g$ into $A^{-1}=\left(\begin{smallmatrix} B &  \\ & b\end{smallmatrix}\right)$ for certain
$B\in \bR^{(n-1)\times (n-1)}$ and $b\in \bR$ and obtain the relations $B^T B=H$ and $b^2=h$.

\begin{proposition}
Suppose $G$ is an almost abelian Lie group and let $\mfg$ be the associated almost abelian Lie algebra with codimension one abelian 
ideal $\mfu$. If $\dim G \leq 4$ or $\ad(Y)|_{\mf u} = \lambda \id_{\mf u}$ for some $\lambda\in \bR$ and some $Y\in \mfg\setminus \mfu$
, the spinorial energy functional for any $\Phi \in F_n^G$ depends only on the $G$-invariant metric
$g=\left(\begin{smallmatrix} H &  \\ & h\end{smallmatrix}\right)$ defined by $\Phi$ and the following formula holds:
  \begin{equation}\label{eq:spinenergyalmostabelian}
    \mc{E}(\Phi) = \frac{\sqrt{\det(H)}}{32\, \sqrt{h} } \left(3\tr(H F H^{-1} F^T)+\tr(F^2)\right)
  \end{equation}
\end{proposition}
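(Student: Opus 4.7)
My plan is to apply the general formula \eqref{spin_energy_homog} directly, exploiting the fact that in the almost abelian setting essentially all Lie brackets vanish, and then to isolate the $\varphi$-dependence using a Clifford-algebraic argument. First, I pick $\tilde{A}$ with $A^{-1} = \diag(B,b)$ block diagonal (possible since $g = \diag(H,h)$, giving $B^T B = H$ and $b^2 = h$). Setting $\tilde B := B^{-1}$, $\tilde b := b^{-1}$ and $G := B F B^{-1}$, the abelianness of $\mfu$ forces $[AX_i, AX_j] = 0$ for $i,j < n$, while $[AX_n, AX_j] = \tilde b\, F \tilde B X_j \in \mfu$ for $j < n$. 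A direct substitution into the definition of $c_{ijk}(A)$ shows that, restricted to $j < k$, the only non-vanishing structure coefficients are
\[
c_{ijn}(A) = -\tilde b\,(G + G^T)_{ji}\ (i,j<n), \qquad c_{njk}(A) = \tilde b\,(G - G^T)_{jk}\ (j<k<n),
\]
so the double sum in \eqref{spin_energy_homog} splits naturally as $S_1 + S_2$, with $S_1$ collecting the $i<n$ contributions (involving only Clifford products $E_j E_n$) and $S_2$ the single $i=n$ contribution (involving only $E_j E_k$ with $j,k<n$).

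I would then argue that both sums are $\varphi$-independent. Expanding $\bigl|\sum_j c_{ijn} E_j E_n \varphi\bigr|^2$ in $S_1$, the diagonal terms combine to $\sum_{i,j<n} c_{ijn}^2 = 2\tilde b^2(\tr(G^2)+\tr(GG^T))$, while the off-diagonal cross terms pair the symmetric matrix $(G+G^T)^2$ against $\langle\varphi, E_jE_l\varphi\rangle$, which is skew in $(j,l)$; they cancel by the standard symmetric--antisymmetric argument, so $S_1$ is $\varphi$-independent. For $S_2 = \tilde b^2|\omega\varphi|^2$ with $\omega := \sum_{j<k<n}(G-G^T)_{jk}\,E_j E_k$, I use $\omega^* = -\omega$ to write $S_2 = -\tilde b^2\langle\varphi,\omega^2\varphi\rangle$, and then analyze $\omega^2$ in the Clifford algebra. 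The key algebraic fact is that for any bivector $\eta$ the grade-$2$ component of $\eta^2$ vanishes (it equals $\tfrac12[\eta,\eta] = 0$, or equivalently the reversion anti-involution acts by $-1$ on bivectors), so $\omega^2$ has only a grade-$0$ and a grade-$4$ component.

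The main obstacle, and where the two hypotheses come in, is controlling the grade-$4$ component, which lives in $\Lambda^4 \spa{E_1,\ldots,E_{n-1}}$: it vanishes as soon as $\dim\mfu = n-1 \leq 3$, i.e.\ when $\dim G \leq 4$; in the alternative hypothesis, $\ad(Y)|_\mfu = \lambda\id$ gives $F = \lambda I$, hence $G = \lambda I$, hence $\omega = 0$, trivializing $S_2$ altogether. In either case $\omega^2$ reduces to its scalar part $-\sum_{j<k<n}(G-G^T)_{jk}^2 = \tr(G^2) - \tr(GG^T)$, so $S_2 = \tilde b^2(\tr(GG^T) - \tr(G^2))$. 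Adding, $S_1 + S_2 = \tilde b^2(\tr(G^2) + 3\tr(GG^T))$, and cyclicity of the trace (using $B^T B = H$) gives $\tr(G^2) = \tr(F^2)$ and $\tr(GG^T) = \tr(HFH^{-1}F^T)$. Finally, the prefactor evaluates as $\tilde b^2/(32\det A) = \sqrt{\det H}/(32\sqrt h)$ using $\det(A^{-1}) = \det B\cdot b = \sqrt{\det H}\sqrt h$, yielding precisely \eqref{eq:spinenergyalmostabelian}. Everything beyond the Clifford-grade analysis is bookkeeping; it is exactly the nontrivial grade-$4$ part of $\omega^2$ in higher dimension that would introduce $\varphi$-dependence via $\langle\varphi, E_j E_k E_l E_m \varphi\rangle$ and spoil the clean formula.
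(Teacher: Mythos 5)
Your proof is correct and follows the same overall strategy as the paper: compute the structure constants $c_{\alpha\beta\gamma}(A)$, which collapse because $\mfu$ is abelian, split the energy into the contribution $S_1$ from $i<n$ (bivectors $E_jE_n$) and the single term $S_2$ from $i=n$ (bivectors $E_jE_k$ with $j,k<n$), establish $\varphi$-independence, and identify the traces. The genuine difference is how $\varphi$-independence of $S_2$ is justified. The paper invokes the index-pigeonhole argument from its Theorem~\ref{th:homspinflowforn=3}: any two distinct pairs drawn from $\{1,2,3\}$ share an index, so the corresponding spinor cross-terms have vanishing real part. You instead package this as a Clifford-grade statement: writing $S_2 = -\tilde b^2\langle\varphi,\omega^2\varphi\rangle$ with $\omega$ a bivector, you note $\omega^2$ has only grade-$0$ and grade-$4$ components (reversion-invariance kills grade $2$), and the grade-$4$ part lives in $\Lambda^4\mfu$ and so vanishes exactly when $\dim\mfu\le 3$, i.e.\ $\dim G\le 4$. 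This is arguably more conceptual than the paper's version: it isolates precisely which terms, namely $\langle\varphi,E_jE_kE_lE_m\varphi\rangle$, would spoil $\varphi$-independence in higher dimensions, and it treats both hypotheses (either $\Lambda^4\mfu=0$, or $F=\lambda I$ forcing $\omega=0$) through the single mechanism of killing that grade-$4$ obstruction, rather than as two separate cases. Your symmetric--skew pairing for $S_1$ is also sound (it is really the same shared-index observation, the shared index being $n$); the paper simply asserts this simplification ``in all dimensions'' without spelling it out. The remaining bookkeeping, $\tr(G^2)=\tr(F^2)$ and $\tr(GG^T)=\tr(HFH^{-1}F^T)$ via $B^TB=H$, together with the prefactor $\tilde b^2/(32\det A)=\sqrt{\det H}/(32\sqrt h)$, matches the paper exactly.
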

\begin{proof}
We compute $c_{\alpha\beta\gamma}:=c_{\alpha \beta \gamma}(A)$ for all $\alpha,\beta,\gamma$. First, make the trivial observations
that $c_{ijk}=0$ for all $i,j,k\in \{1,\ldots,n-1\}$ and that $c_{\alpha n n}=c_{n\alpha n}=c_{nn \alpha}=0$ for all $\alpha\in \{1,\ldots,n\}$.
So we are left with those $c_{\alpha\beta\gamma}$ where exactly two indices are from $\{1,\ldots,n-1\}$. We have
\begin{equation*}
\begin{split}
c_{ijn}&=-c_{inj}=g_0(B [B^{-1} X_i,\tfrac{1}{b} X_n],X_j)+g_0(B [B^{-1} X_j,\tfrac{1}{b} X_n],X_i)\\
&=-\frac{g_0(BFB^{-1} X_i, X_j)+g_0(BFB^{-1} X_j,X_i)}{b}=-\frac{(D+D^T)_{ij}}{b}
\end{split}
\end{equation*}
with $D:=BFB^{-1}\in \bR^{(n-1)\times (n-1)}$ and, similarly,
\begin{equation*}
\begin{split}
c_{nij}&=g_0(B [\tfrac{1}{b} X_n,B^{-1} X_j],X_i)-g_0(B [\tfrac{1}{b} X_n,B^{-1} X_i],X_j)=-\frac{(D-D^T)_{ij}}{b}
\end{split}
\end{equation*}
for all $i,j\in\{1,\ldots,n-1\}$. As for $i\in \{1,\ldots,n-1\}$, $c_{i\alpha\beta}\neq 0$ implies $\alpha=n$ or $\beta=n$, formula \eqref{spin_energy_homog} for $\cE (\Phi)$ simplifies in all dimensions to
\begin{equation*}
\cE (\Phi)=\frac{b\, \det(B)}{32}\left(\sum_{i,j=1}^{n-1} c_{ijn}^2 +\left|\sum_{1\leq i<j \leq n-1} c_{nij}^2 E_i\cdot E_j \cdot \varphi\right|^2\right).
\end{equation*}
Now if $n\leq 4$, obviously $n-1\leq 3$ and by the same argument as in the proof of Theorem \ref{th:homspinflowforn=3}, the latter formula simplifies in this case further to
\begin{equation*}
\begin{split}
\cE (\Phi)&=\frac{b\,\det(B)}{32}\left(\sum_{i,j=1}^{n-1} c_{ijn}^2 +\sum_{1\leq i<j\leq n-1} c_{nij}^2\right)\\
&=\frac{\det(B)}{32\,b}\left(\sum_{i,j=1}^{n-1} (d_{ij}+d_{ji})^2 +\sum_{1\leq i<j\leq n-1} (d_{ij}-d_{ji})^2 \right)\\
&=\frac{\det(B)}{32\, b }\left(3\sum_{i,j=1}^{n-1} d_{ij}^2+\sum_{i,j=1}^{n-1} d_{ij} d_{ji}\right)=\frac{\det(B)}{32\, b } \left(3\tr(DD^T)+\tr(D^2)\right)\\
&=\frac{\sqrt{\det(H)}}{32\, \sqrt{h} } \left(3\tr(H F H^{-1} F^T)+\tr(F^2)\right).
\end{split}
\end{equation*}
This simplification also takes place if $F=\lambda\, I_{n-1}$ for some $\lambda\in \bR$ and $n$ arbitrary as then always $D=\lambda\, I_{n-1}$ and the second term vanishes.
\end{proof}
In the following we will study the spinor flow on almost abelian Lie groups under the assumption that $n \leq 4$ or $F = \lambda I_{n-1}$. It is a priori not clear that the spinor flow preserves the class of metrics $g$ which are presented by $g=\left(\begin{smallmatrix} H &  \\ & h\end{smallmatrix}\right)$ with respect to the basis $X_1,\ldots,X_n$.

To see this it suffices to show that for a curve of metrics $(g_t)_{t\in I}$ represented by
$\left(\begin{smallmatrix} H &  v(t) \\ v(t)^T& h\end{smallmatrix}\right)$ with respect to the basis $X_1,\ldots, X_{n}$ 
with $v(0)=0$ we have $0=\left.\frac{d}{dt}\right|_{t=0}\cE (\Phi_t)$ for the horizontal lift $(\Phi_t)_{t\in I}$ of that curve.

So we need to compute $\cE(\Phi_t)$. Let $\bar{g}_t$ be the metric on $\mfg$ with orthonormal
basis $(X_1(t):=X_1,\ldots, X_{n-1}:=X_{n-1}(t), X_n(t):=X_n-H^{-1} v(t))$ and note that we have
$g_t=\diag(H, h- v(t)^T h^{-1} v(t))$ with respect
to that basis. Hence, we may apply formula \eqref{eq:spinenergyalmostabelian} with respect to the background metric $\bar{g}_t$
to compute $\cE(\Phi_t)$. In general, this formula depends on the chosen background metric since the matrix $F$ 
appearing in that formula is the matrix representing the endomorphism $\ad(X_n(t))|_{\mfu}$ with respect to the basis 
$(X_1(t),\ldots,X_{n-1}(t))$. However, in our case $(X_1(t),\ldots,X_{n-1}(t))=(X_1,\ldots,X_n)$ and $\ad(X_n(t))|_{\mfu}=\ad(X_n)|_{\mfu}$ and so $F$ is independent of $t$. As $g_t=\left(\begin{smallmatrix} H & 0 \\ 0 & h- v(t)^T h^{-1} v(t) \end{smallmatrix}\right)$ with respect to the basis $(X_1(t),\ldots,X_n(t))$ and $v(0)=0$, one immediately sees
from formula \eqref{eq:spinenergyalmostabelian} that $\left.\frac{d}{dt}\right|_{t=0}\cE (\Phi_t)=0$.
Hence, the homogeneous spinor flow preserves the class of metrics for which $X_n$ is perpendicular to $\spa{X_1,\ldots,X_{n-1}}$.
The same is then obviously also true for the volume-normalized homogeneous spinor flow, i.e. we have shown:
\begin{lemma}
Let $G$ be an almost abelian Lie group and $\mfg$ be the associated almost abelian Lie algebra with codimension one abelian ideal $\mfu$
and assume that $\dim \mfg \leq 4$ or $\ad(Y)|_{\mf u} = \lambda \id_{\mf u}$ for some $\lambda\in \bR$ and some $Y\in \mfg\setminus \mfu$. Moreover, let $\Phi=(g,\varphi)$ be a $G$-invariant universal spinor field and let $X_1,\ldots,X_n$ be a basis of 
$\mfg$ such that $X_1,\ldots,X_{n-1}$ is a basis of $\mfu$ and $X_n\in \mfu^{\perp}$. Then the solution of both
the homogeneous and the volume-normalized homogeneous spinor flow with initial value $(g,\varphi)$ is given by $(g_t,\varphi)$ for $g_t$ 
being of the form $g_t=\diag(H_t,h_t)$ with respect to the basis $(X_1,\ldots,X_n)$ for $H_t\in \Sym_+(n-1,\bR)$ and $h_t\in\bR_+$.
\end{lemma}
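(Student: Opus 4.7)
The plan is to reduce the statement to a variational check that the negative gradient $Q_1$ is tangent to the submanifold of block-diagonal metrics at any such metric, and to observe that the spinorial part does not evolve at all.

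First, by the preceding proposition, at every $G$-invariant universal spinor with block-diagonal metric (i.e.\ with $X_n\perp\mfu$) the spinorial energy $\cE(\Phi)$ is a function of the metric $g=\diag(H,h)$ alone, with no dependence on $\varphi$. Combined with \eqref{eq:Q2hom} this forces $Q_2(\Phi)=0$, and hence also $\tilde Q_2(\Phi)=0$ by \eqref{eq:tildeQ}, so the spinorial component stays frozen: $\varphi_t\equiv\varphi$ as long as the metric remains block-diagonal. It therefore suffices to verify that the metric stays block-diagonal.

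For the metric part, by \eqref{eq:Q1hom} the component of $Q_1$ in an off-diagonal direction $\dot g=\left(\begin{smallmatrix}0&v_0\\v_0^T&0\end{smallmatrix}\right)$ equals $-\det(A)\cdot \tfrac{d}{dt}\bigr|_{t=0}\cE(\Phi_t)$ along the horizontal lift of any curve $g_t$ with $g_0=g$, $\dot g_0=\dot g$. Take such a curve in the concrete form $g_t=\left(\begin{smallmatrix}H&v(t)\\v(t)^T&h\end{smallmatrix}\right)$ with $v(0)=0$, $\dot v(0)=v_0$. The key manoeuvre is a \emph{time-dependent change of basis}: set $X_n(t):=X_n-H^{-1}v(t)$, leaving $X_1,\dots,X_{n-1}$ fixed. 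In the basis $(X_1,\dots,X_{n-1},X_n(t))$ the metric $g_t$ becomes the block-diagonal matrix $\diag\!\bigl(H,\;h-v(t)^TH^{-1}v(t)\bigr)$. Crucially, because $\mfu$ is abelian, $\ad$ of any element of $\mfu$ vanishes on $\mfu$, so
\[
\ad(X_n(t))|_\mfu=\ad(X_n)|_\mfu-\ad(H^{-1}v(t))|_\mfu=\ad(X_n)|_\mfu,
\]
and the matrix $F$ entering formula \eqref{eq:spinenergyalmostabelian} is independent of $t$. Choosing background metrics $\bar g_t$ making the time-dependent bases orthonormal and applying \eqref{eq:spinenergyalmostabelian} then gives
\[
\cE(\Phi_t)=\frac{\sqrt{\det(H)}}{32\sqrt{h-v(t)^TH^{-1}v(t)}}\bigl(3\tr(HFH^{-1}F^T)+\tr(F^2)\bigr),
\]
so the whole $t$-dependence sits inside the scalar $h(t):=h-v(t)^TH^{-1}v(t)$. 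Since $v(0)=0$ this scalar has vanishing first derivative at $t=0$, hence $\tfrac{d}{dt}\bigr|_{t=0}\cE(\Phi_t)=0$, which is exactly what we need.

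Two small points close the argument. First, as noted in the earlier remark, replacing the background metric changes $\cE$ only by a constant factor, so it is legitimate to use the adapted background $\bar g_t$ for each $t$ when differentiating at $t=0$. Second, the extra term $\tfrac{n-2}{2n}\cE(g,\varphi)\,g$ in $\tilde Q_1$ from \eqref{eq:tildeQ} is a scalar multiple of $g$, which is block-diagonal, so tangency to the block-diagonal submanifold passes from $Q_1$ to $\tilde Q_1$ automatically. The main conceptual obstacle is locating the right time-dependent basis change; once it is made, the invariance of $F$ under this change (which is where the hypothesis $n\le 4$ or $\ad(Y)|_\mfu=\lambda\id$ is implicitly used, via the applicability of \eqref{eq:spinenergyalmostabelian}) makes the remaining computation immediate.
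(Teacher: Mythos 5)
Your argument is essentially the paper's own: you verify that $Q_2$ vanishes because the energy formula depends only on the metric, and you show $Q_1$ is tangent to the block-diagonal locus by the same time-dependent unipotent basis change $X_n(t)=X_n-H^{-1}v(t)$, using that $\mfu$ abelian keeps $F$ fixed so that $\cE(\Phi_t)$ depends on $t$ only through $h-v(t)^TH^{-1}v(t)$. (You even correct a small typo in the paper, which writes $h-v(t)^Th^{-1}v(t)$ in place of $h-v(t)^TH^{-1}v(t)$.)
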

Next, we look at the flow equations. Let $\Phi=(g,\varphi)$ be a left-invariant universal spinor field on $G$
and $(X_1,\ldots,X_n)$ be a basis of $\mfg$ such that $g=\diag(H,h)$ with respect to that basis for certain
$H\in \Sym_+(n-1,\bR)$ and $h\in\bR_+$. Moreover, set $\cE(H,h):=\cE(\Phi)$ with $\cE(\Phi)$ as in
\eqref{eq:spinenergyalmostabelian}. Then $t\mapsto g_t=\diag(H,h+t)$ fulfills $\dot{g}_t=h E_{nn}$ and
$A\in \GL_+(n)$ with $A^{-1}=\diag(\sqrt{H},\sqrt{h})$ is a lift of $g$. Hence, we get
\begin{equation*}
\begin{split}
Q_1(\Phi)_{nn}&=-\frac{h^2}{\sqrt{\det(H)\, h}}\pdiff{\cE}{h}(H,h)=\frac{1}{64}\left(3\tr(HFH^{-1} F^T)+\tr(F^2)\right)\\
&=\frac{\sqrt{h}}{2\sqrt{\det(H)}} \cE(H,h) \geq 0,\\
\tilde{Q}_1(\Phi)_{nn}&=Q_1(\Phi)_{nn}+\frac{n-2}{2n} \frac{\cE(H,h)\, h}{\sqrt{\det(H)\, h}}=\frac{n-1}{32n} \left(3\tr(HFH^{-1} F^T)+\tr(F^2)\right)\\
&=\frac{(n-1)\,\sqrt{h}}{n\,\sqrt{\det(H)}}\, \cE(H,h) \geq 0,
\end{split}
\end{equation*}
from \eqref{eq:Q1hom} and \eqref{eq:tildeQ}. This shows that $I\ni t \mapsto h_t$ is strictly monotonically increasing along any solution $(\Phi_t)_{t\in I}$ of both the usual and the volume-normalized homogeneous spinor flow,
or $(g,\varphi)$ is a critical point of both homogeneous spinor flows. We show now that if
$F$ is not nilpotent, then $Q_1(\Phi)_{nn}\geq C$ for some $C>0$ independent of $H$ and $h$. Note that then the same is true also for $\tilde{Q}_1(\Phi)_{nn}$ and
we have shown that for $F$ being non-nilpotent, we cannot have any spinor soliton. By definition of the Frobenius norm $\|\cdot\|_F$, the identity $\tr(H FH^{-1} F^T)=\bigl\|\sqrt{H} F \sqrt{H}^{-1}\bigr\|^2_F$ holds. Clearly, the complex eigenvalues $\lambda_1,\ldots,\lambda_{n-1}$ of $F$ and $\sqrt{H} F \sqrt{H}^{-1}$ coincide. For the Frobenius norm the following inequality holds
$$\|B\|_F^2 \geq \sum_{i=1}^{n-1} \lambda_i(B)^2,$$
where $\lambda_i(B)$ are the Eigenvalues of $B$. 
%
%
In particular, we conclude
\begin{equation*}
\tr(H FH^{-1} F^T)=\left\|\sqrt{H}F \sqrt{H}^{-1}\right\|^2_F\geq \sum_{i=1}^{n-1} \left|\lambda_{i}\right|^2
\end{equation*}
and so
\begin{equation*}
\begin{split}
Q_1(\Phi)_{nn}&=\frac{3\tr(HFH^{-1} F^T)+\tr(F^2)}{64}\geq \frac{3\sum_{i=1}^{n-1} \left|\lambda_{i}\right|^2 +\sum_{i=1}^{n-1} ((\Re\,\lambda_i)^2-(\Im\,\lambda_i)^2)}{64}\\
&=\frac{\sum_{i=1}^{n-1} \left(\left|\lambda_{i}\right|^2 +(\Re\,\lambda_i)^2\right)}{32}=:C
\end{split}
\end{equation*}
independently of $t$. If $F$ is not nilpotent, then $C>0$ and so, in particular, $(g,\varphi)$ cannot be a critical point of one of the homogeneous spinor flows.

If $F$ is nilpotent, then we have $\tr(F^2)=0$ and so $(g,\varphi)$ is a critical point of the usual and so of both homogeneous spinor 
flows if and only if $0=\tr(H FH^{-1} F^T)=\left\|\sqrt{H} F \sqrt{H}^{-1}\right\|^2_F$, i.e.\@ if and only if $F=0$, i.e. if and only 
if $\mfg$ is abelian. Hence, we have obtained:
\begin{proposition}\label{pro:almostabelianspinorsolitons}
Let $\mfg$ be an almost abelian Lie algebra of dimension at most four or assume that $\ad(X)|_{\mfu}$ acts as a multiple of the identity on a codimension one
abelian ideal $\mfu$ for any $X\in \mfg\setminus \mfu$. If $\mfg$ is not abelian, then any Lie group $G$ with associated Lie algebra $\mfg$ does not possess a left-invariant spinor
soliton.
\end{proposition}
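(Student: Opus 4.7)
The proposition is essentially a corollary of the preceding computation, so the plan is to interpret a spinor soliton as (at least) a critical point of either the homogeneous spinor flow or its volume-normalized version, and then to rule this out by showing that under the stated hypotheses the $(n,n)$-component of the negative gradient is strictly positive whenever $\mfg$ is non-abelian. Concretely, I would invoke the formulas derived just above the statement:
\begin{equation*}
Q_1(\Phi)_{nn} = \frac{3\tr(HFH^{-1}F^T)+\tr(F^2)}{64}, \qquad \tilde{Q}_1(\Phi)_{nn} = \frac{(n-1)\bigl(3\tr(HFH^{-1}F^T)+\tr(F^2)\bigr)}{32 n},
\end{equation*}
so that criticality for either flow forces the scalar quantity $S(H,F):=3\tr(HFH^{-1}F^T)+\tr(F^2)$ to vanish.

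Next I would split into the two cases already flagged in the preceding discussion. If $F$ is \emph{not} nilpotent, I would apply the Frobenius-norm inequality $\|B\|_F^2 \geq \sum_i |\lambda_i(B)|^2$ to $B = \sqrt{H}\, F \sqrt{H}^{-1}$, which has the same complex eigenvalues $\lambda_1,\dots,\lambda_{n-1}$ as $F$, and combine it with $\tr(F^2)=\sum_i\bigl((\Re\lambda_i)^2-(\Im\lambda_i)^2\bigr)$ to obtain
\begin{equation*}
S(H,F) \;\geq\; 3\sum_i |\lambda_i|^2 + \sum_i\bigl((\Re\lambda_i)^2-(\Im\lambda_i)^2\bigr) \;=\; \sum_i\bigl(|\lambda_i|^2+(\Re\lambda_i)^2\bigr) + \sum_i |\lambda_i|^2 \;>\; 0,
\end{equation*}
which is a contradiction. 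This bound is moreover independent of $H$ and $h$, which gives the stronger information that $h_t$ grows at least linearly along any solution, confirming that no self-similar behaviour is possible.

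If $F$ \emph{is} nilpotent, then $\tr(F^2)=0$ and $S(H,F)=3\bigl\|\sqrt{H}F\sqrt{H}^{-1}\bigr\|_F^2 \geq 0$, with equality only if $\sqrt{H}F\sqrt{H}^{-1}=0$, i.e.\ $F=0$. But $F=\ad(Y)|_\mfu$ for $Y\in\mfg\setminus\mfu$, and $F=0$ means $\mfg$ is abelian, contradicting the hypothesis. In both cases the $(n,n)$-components of $Q_1(\Phi)$ and $\tilde{Q}_1(\Phi)$ are nonzero, so $\Phi$ cannot be a critical point of either flow, proving the claim for every choice of $G$ integrating $\mfg$.

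The main point to pin down is really definitional rather than computational: one has to be sure that ``left-invariant spinor soliton'' reduces to criticality for one of the two homogeneous flows (equivalently, that any self-similar left-invariant solution would yield a left-invariant critical point of $\cE|_{\mc{N}_1}$ after rescaling, using that the class of diagonal-block metrics $g=\diag(H,h)$ is preserved by the flow as shown in the previous lemma). Once that is in hand, the eigenvalue estimate above does all the remaining work, and I would not expect any genuine obstacle.
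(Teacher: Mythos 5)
Your proof reproduces the paper's argument in all essential respects: expressing the $(n,n)$-entries $Q_1(\Phi)_{nn}$ and $\tilde Q_1(\Phi)_{nn}$ through $S(H,F)=3\tr(HFH^{-1}F^T)+\tr(F^2)$, splitting on whether $F$ is nilpotent, using the Frobenius-norm eigenvalue estimate in the non-nilpotent case and the norm characterization of $F=0$ in the nilpotent case; your definitional concern is settled exactly as you suspect, since the paper identifies spinor solitons with critical points of the volume-normalized flow (the self-similar solutions ``which evolve only by scaling the metric'' from the general section). One small arithmetic slip in the displayed chain: $3\sum_i|\lambda_i|^2+\sum_i\bigl((\Re\lambda_i)^2-(\Im\lambda_i)^2\bigr)=2\sum_i\bigl(|\lambda_i|^2+(\Re\lambda_i)^2\bigr)$, not $\sum_i\bigl(|\lambda_i|^2+(\Re\lambda_i)^2\bigr)+\sum_i|\lambda_i|^2$; this does not affect strict positivity when $F$ is not nilpotent.
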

\begin{remark*}
By \cite{fr}, there are almost abelian Lie algebras of dimension seven which are not abelian but possess a left-invariant parallel $\G_2$-structure $\phi$, which then has to induce a flat metric $g_{\phi}$.
So they also admit a $g_{\phi}$-parallel spinor field $\varphi$ and so a critical point of even the usual homogeneous spinor flow. This shows that Proposition \ref{pro:almostabelianspinorsolitons}
is no longer true in higher dimensions.
\end{remark*}
Next, we compute $Q_1(\Phi)_{ij}$ for all $i,j\in \{1,\ldots,n-1\}$. Denote by $E_{ij}$ the matrix whose only non-zero component is at the place $(i,j)$ and is equal to one. Then $t\mapsto g_t=\diag(H+tHE_{ij} H,h)$ fulfills $\dot{g}_t=HE_{ij} H$ and $A=\diag(\sqrt{H},\sqrt{h})^{-1}$ is a lift of $g$. Thus, \eqref{eq:Q1hom} and \eqref{eq:tildeQ} imply
\begin{equation*}
\begin{split}
Q_1(\Phi)_{ij}&=-\frac{1}{\sqrt{\det(H)\,h}}\left.\frac{d}{dt}\right|_{t=0}\cE(H+t H E_{ij} H,h)\\
&=-\frac{1}{64h}\Biggl(\frac{\tr(\mathrm{adj}(H)H E_{ij} H)\, \left(3\tr(H F H^{-1} F^T)+\tr(F^2)\right)}{\det(H)}+\\
&\qquad\qquad\quad  6\tr(H E_{ij} HFH^{-1}F^T)-6\tr(HF E_{ij} F^T)\Biggr)\\
&=-\frac{1}{64h}\Biggl(H_{ij} \left(3\tr(H F H^{-1} F^T)+\tr(F^2)\right)\\
&\qquad\qquad\quad +6 (HFH^{-1} F^T H)_{ij}-6 (F^T H F)_{ij}\Biggr)
\end{split}
\end{equation*}
as $H$ is symmetric. So if we write $Q_1(\Phi)=\diag(P_1(\Phi),Q_1(\Phi)_{nn})$ and $\tilde Q_1(\Phi)=\diag(\tilde P_1(\Phi),\tilde Q_1(\Phi)_{nn})$, we have
\begin{equation*}
\begin{split}
P_1(\Phi)&=-\frac{1}{64\, h}\left( H\left(3\tr(HFH^{-1}F^T)+\tr(F^2)\right)+6 H F H^{-1} F^T H -6 F^T H F\right),\\
\tilde{P}_1(\Phi)&=P_1(\Phi)+\frac{n-2}{2n} \frac{\cE(H,h)}{\sqrt{\det(H)\, h}}\, H\\
&=-\frac{1}{64\, h}\left( \frac{2}{n}\, H\left(3\tr(HFH^{-1}F^T)+\tr(F^2)\right)+6 H F H^{-1} F^T H -6 F^T H F\right).
\end{split}
\end{equation*}
Summarizing, the homogeneous spinor flow in the above setting is given by
\begin{equation*}
\begin{split}
\dot{H}_t&=-\tfrac{1}{64 h_t}\left( H_t \left(3\tr(H_tFH_t^{-1}F^T)+\tr(F^2)\right)+6 H_t F H_t^{-1} F^T H_t -6 F^T H_t F\right)\\
\dot{h}_t&=\frac{3\tr(H_tFH_t^{-1} F^T)+\tr(F^2)}{64},
\end{split}
\end{equation*}
whereas the volume-normalized homogeneous spinor flow is given by the initial value problem
\begin{equation*}
\begin{split}
\dot{H}_t&=-\tfrac{1}{64\, h_t}\left( \tfrac{2}{n}\, H_t\left(3\tr(H_tFH_t^{-1}F^T)+\tr(F^2)\right)+6 H_t F H_t^{-1} F^T H_t -6 F^T H_t F\right),\\
\dot{h}_t&=\tfrac{n-1}{32n} \left(3\tr(H_tFH_t^{-1} F^T)+\tr(F^2)\right)
\end{split}
\end{equation*}
In the case of the usual homogeneous spinor flow, if we set $k_t:=\det(H_t)$, we obtain
\begin{equation*}
\begin{split}
\dot{k}_t&=\tr(\det(H_t) H_t^{-1}\dot{H}_t)\\
&=-k_t\tfrac{\tr(I_{n-1})\left(3\tr(H_tFH_t^{-1}F^T)+\tr(F^2)\right)+6 \tr (F H_t^{-1} F^T H_t) -6 \tr (H_t^{-1}F^T H_t F)}{64 h_t}\\
&=-(n-1) \tfrac{k_t}{h_t} \tfrac{3\tr(H_tFH_t^{-1}F^T)+\tr(F^2)}{64}=-(n-1)\, k_t\, \tfrac{\dot{h}_t}{h_t},
\end{split}
\end{equation*}
giving us
\begin{equation*}
\frac{\det(H_t)}{\det(H_0)} = \left(\frac{h_0}{h_t}\right)^{n-1}.
\end{equation*}
In the case of the volume-normalized homogeneous spinor flow, we directly get 
\begin{equation*}
\frac{\det(H_t)}{\det(H_0)} = \frac{h_0}{h_t}
\end{equation*}
by the ``volume-normalization''. An easy consequence of these identities and the fact that $\dot{h}_t \geq C > 0$ for both homogeneous
spinor flows is the following theorem.
\begin{theorem}
Let $G$ be an almost abelian Lie group, $\mfg$ be the associated almost abelian Lie algebra and $\mfu$ be an abelian ideal of
codimension one. Assume that, for $Y\in\mfg\setminus \mfu$, the endomorphism $\ad(Y)|_{\mf u}$ is not nilpotent and that either $\dim(G)\leq 4$ or $\ad(Y)|_{\mfu}=\lambda \id|_{\mfu}$ for some $\lambda\in\bR$. 

If $(H_t, h_t)_{t \in (T_-, T_+)}$ is the maximal solution of the initial value problem with $T_-, T_+ \in \R \cup \{-\infty, \infty\}$ corresponding to either the usual homogeneous spinor flow or the volume-normalized homogeneous spinor flow, then $T_-$ is necessarily finite, i.e.\@ $T_- \in \R$ and $\lim_{t\searrow T_-} h_t=0$ and $\lim_{t\searrow T_-} \det(H_t)=\infty$.
Moreover, $\lim_{t\nearrow T_+} h_t=\infty$ and $\lim_{t\nearrow T_+} \det(H_t)=0$.
\end{theorem}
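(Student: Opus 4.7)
My plan combines the strict monotonicity of $h_t$, coming from the uniform bound $\dot h_t \geq C > 0$ established just before the theorem, with the conservation identities $\det(H_t) h_t^{n-1} \equiv \det(H_0) h_0^{n-1}$ (usual flow) and $\det(H_t) h_t \equiv \det(H_0) h_0$ (volume-normalized flow). Since these express $\det(H_t)$ as a strictly decreasing function of $h_t$, the claims about $\det(H_t)$ at both endpoints reduce at once to the corresponding claims about $h_t$. Finiteness of $T_-$ follows immediately by integrating the inequality $\dot h_t \geq C$ backward from $t = 0$: this gives $h_t \leq h_0 + Ct$ on $(T_-, 0]$, and $h_t > 0$ then forces $T_- \geq -h_0/C$.

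The core of the proof is to show $h_{T_-} = 0$ and $h_{T_+} = \infty$, which I would argue by contradiction using maximality of the interval of existence. Strict monotonicity of $h_t$ produces limits $h_{T_-} \in [0, h_0)$ and $h_{T_+} \in (h_0, +\infty]$. Suppose $h_{T_-} = h_* > 0$; then the conservation identity confines $\det(H_t)$ to a compact subinterval of $\mathbb{R}_+$ near $T_-$, so both $h_t$ and $\det(H_t)$ stay bounded above and away from zero. Maximality forces $(H_t, h_t)$ to exit every compact subset of $\Sym_+(n-1,\bR) \times \mathbb{R}_+$ as $t \searrow T_-$, and the only remaining escape mechanism is the spreading of the eigenvalues of $H_t$ with bounded product. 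To preclude this I would pass to the shape variable $K_t := H_t / \det(H_t)^{1/(n-1)} \in \Sym_+^0(n-1,\bR)$ and reparametrize time by $d\tau = dt/h_t$. The system then decouples into $dK/d\tau = \tfrac{3}{32}(F^T K F - K F K^{-1} F^T K)$, which a short computation identifies as the downward gradient flow of $\tfrac{3}{32}\tr(K F K^{-1} F^T)$ on the symmetric space $\Sym_+^0(n-1,\bR) \cong \SL(n-1,\bR)/\SO(n-1)$ in its $\GL$-invariant metric, coupled with $d(\log h)/d\tau = (3\tr(KFK^{-1}F^T) + \tr(F^2))/64$. Under $h_* > 0$, the backward $\tau$-interval corresponding to $(T_-, 0]$ is finite and $\log h$ stays bounded on it; any escape of $K_\tau$ in this finite $\tau$-interval would then have to be accompanied by non-integrable blow-up of $\tr(KFK^{-1}F^T)$, which via the scalar equation contradicts the bound on $\log h$. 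The forward case $h_{T_+} = \infty$ is handled by the symmetric argument.

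The main obstacle is the final step in this contradiction — proving that any finite-$\tau$ escape of the gradient flow to the boundary of $\Sym_+^0(n-1,\bR)$ must be accompanied by $\int \tr(KFK^{-1}F^T)\,d\tau$ diverging, a properness/coercivity statement for this functional along escape directions of the symmetric space. For $F = \lambda\id$ the $K$-equation is stationary, so the claim is vacuous and the spin structure of the flow reduces to the scalar evolution $\dot h = $ const — $H_t$ blows up strictly through $h_t \to 0$. Under $\dim G \leq 4$, so $n - 1 \leq 3$, the relevant symmetric space has dimension at most $5$ and an explicit Iwasawa-type analysis of $\sqrt{K}\,F\,\sqrt{K}^{-1}$ as the eigenvalues of $K$ separate should suffice, with the non-nilpotence hypothesis entering precisely to prevent $F$ from commuting with the escape directions.
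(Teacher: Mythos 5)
Your reduction of the theorem to its essential content is correct: the conservation law $\det(H_t)h_t^{n-1} = \mathrm{const}$ (resp.\ $\det(H_t)h_t = \mathrm{const}$) turns $\det(H_t)$ into an explicit monotone function of $h_t$, the lower bound $\dot h_t \geq C > 0$ gives $T_- > -\infty$ immediately by integration, and what remains is to rule out the solution terminating with $h_t$ bounded away from $0$ (as $t\searrow T_-$) or with $h_t$ staying bounded (as $t\nearrow T_+$) while the eigenvalues of $H_t$ spread at bounded determinant. You correctly identify this as the one substantive point.

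But the machinery you propose to close it --- reparametrising by $d\tau = dt/h_t$, identifying the shape equation as a gradient flow on $\Sym_+^0(n-1,\bR)\cong\SL(n-1,\bR)/\SO(n-1)$, and a properness estimate along escape directions that you yourself flag as the unproved ``main obstacle'' --- is both heavier than needed and left open exactly where the work is. That step dissolves under a direct Gronwall estimate. Assume $h_t\searrow h_*>0$ as $t\searrow T_-$. Put $e_t := 3\tr(H_tFH_t^{-1}F^T)+\tr(F^2)$; since $\dot h_t$ is a fixed positive multiple of $e_t$ for both flows, $\int_{T_-}^0\dot h_t\,dt = h_0-h_*<\infty$ gives $e_t\in L^1(T_-,0)$. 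On the other hand the flow equation for $H_t$ together with $\|H_tFH_t^{-1}F^TH_t\|\leq\|H_t\|\,\tr(H_tFH_t^{-1}F^T)$ yields $\|\dot H_t\|/\|H_t\|\leq(3e_t+C_F)/(64h_t)\leq(3e_t+C_F)/(64h_*)$ with $C_F$ depending only on $F$, which is integrable on $(T_-,0)$; hence $\log\|H_t\|$ stays bounded. Combined with the boundedness of $\det H_t$, this confines $(H_t,h_t)$ to a compact subset of $\Sym_+(n-1,\bR)\times\bR_+$, contradicting maximality. At $T_+$ the same estimate applies and is even simpler, since one can check $\tr(K_tFK_t^{-1}F^T)$ is non-increasing in $t$, so $e_t$ is bounded forward; one concludes $T_+=\infty$ and then $h_t\to\infty$ from $\dot h_t\geq C$. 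Note finally that the hypotheses $\dim G\leq 4$ or $F=\lambda\id$, and the non-nilpotence of $F$, play no role in this escape argument --- they enter only through the energy formula itself and the uniform bound $\dot h_t\geq C$, not, as your last paragraph suggests, through any interplay between $F$ and the degeneration directions of the symmetric space.
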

\begin{remark*}
It is not known if $T_+$ is finite or not.
\end{remark*}
\begin{example}
Let $F=I_{n-1}$. Then the homogeneous spinor flow reads
\begin{equation*}
\dot{h}_t=\frac{n-1}{16},\qquad \dot{H}_t=-\frac{n-1}{16\, h_t} H_t
\end{equation*}
and its solution is given by
\begin{equation*}
h_t=h_0+\frac{n-1}{16} t,\quad H_t= \frac{h_0}{h_0+\frac{n-1}{16} t} H_0\, .
\end{equation*}
The volume-normalized homogeneous spinor flow reads
\begin{equation*}
\dot{h}_t=\frac{(n-1)^2}{8 n},\qquad \dot{H}_t=-\frac{n-1}{8 n \, h_t} H_t
\end{equation*}
and its solution is given by
\begin{equation*}
h_t=h_0+\frac{(n-1)^2}{8 n} t,\quad H_t= \frac{h_0^{\tfrac{1}{n-1}}}{\Bigl(h_0+\frac{(n-1)^2}{8 n} t\Bigr)^{\tfrac{1}{n-1}}} H_0\, .
\end{equation*}
\end{example}

\section{The flag manifold}
The cone construction of B\"ar \cite{bae2} relates the geometry of spaces equipped with Killing spinor fields to the geometry of spaces with parallel spinor fields. By a theorem of Wang existence of a parallel spinor field implies that the holonomy of the underlying Riemannian manifold must be reduced. In fact, its holonomy must be $\SU(n), \Sp(n), G_2$ or $\Spin(7)$ and its Ricci curvature tensor vanishes. If $(M, g, \varphi)$ is a Riemannian manifold with a Killing spinor field $\varphi$, i.e. $\varphi$ satisfies
$$\nabla^g_X \varphi = \lambda X \cdot \varphi,$$
then the Riemannian cone $CM = (0, \infty) \times M$ with the metric $g_{CM} = dr^2 + r^2 g$ admits a parallel spinor field. The geometry of $(M,g)$ can then be characterized by the holonomy group of $(CM, g_{CM})$. If the holonomy group of $CM$ is $\SU(n)$, then the manifold $(M,g)$ admits a Sasaki--Einstein structure. In the case $\Spin(7)$, $(M,g)$ admits a weak $G_2$ structure. The case of $G_2$ is perhaps the most mysterious of the cases. In that case $(M,g)$ admits a {\em (strictly) nearly K\"ahler} structure. A nearly K\"ahler structure is given by a Riemannian metric $g$ together with an almost complex structure $J$ satisfying
$$(\nabla^g_X J) X = 0$$
for every vector field $X$. Four homogeneous examples of strictly nearly K\"ahler $6$-manifolds are known: $S^6 = G_2 / \SU(3)$, $S^3 \times S^3 = \SU(2)^3 / \Delta \SU(2)$, $\C P^3 = \Sp(2) / U(1) \times \Sp(1)$ and $F_{1,2} = \SU(3)/T^2$. Cort\'es and V\'asquez \cite{corvas} constructed locally homogeneous examples by forming quotients of the nearly K\"ahler $S^3 \times S^3$. Foscolo and Haskins \cite{foha} recently constructed two new cohomogeneity $1$ examples on $S^6$ and $S^3 \times S^3$. At the time of writing, these are all known examples. We will now study the spinorial energy and the spinor flow on the flag manifold $F_{1,2}$. The flag manifold $F_{1,2}$ is the homogeneous $6$-manifold $\SU(3) / T^2$, where $T^2$ is embedded in $\SU(3)$ via
\[
(\theta_1, \theta_2) \mapsto \begin{pmatrix} \theta_1 & 0 & 0 \\ 0 & \theta_2 & 0 \\ 0 & 0 & \bar \theta_1 \bar \theta_2 \end{pmatrix}
\]
Let $\mathfrak{t}^2$ be the Lie algebra of $T^2$ and let $X_i \in \mf{t}^2$, $i=1,2$, be the standard basis corresponding to $\theta_1$ and $\theta_2$ respectively. The image of $\mathfrak{t}^2$ in the Lie algebra $\su(3)$ of $\SU(3)$ is spanned by the matrices
\[
\begin{pmatrix} i & 0 & 0 \\ 0 & 0 & 0 \\ 0 & 0 & -i \end{pmatrix} \text{ and } \begin{pmatrix} 0 & 0 & 0 \\ 0 & i & 0 \\ 0 & 0 & -i \end{pmatrix}.
\]
A reductive complement $\mathfrak{p}$ is spanned by the matrices $R_1, \ldots, R_6$
\begin{center}
\begin{tabular}{lll}
  $\begin{pmatrix} 0 & 0 & 0 \\ 0 & 0 & -1 \\ 0 & 1 & 0 \end{pmatrix},$ & $-i \begin{pmatrix} 0 & 0 & 0 \\ 0 & 0 & 1 \\ 0 & 1 & 0 \end{pmatrix},$ & $\begin{pmatrix} 0 & 0 & 1 \\ 0 & 0 & 0 \\ -1 & 0 & 0 \end{pmatrix}$, \\
  $-i \begin{pmatrix} 0 & 0 & 1 \\ 0 & 0 & 0 \\ 1 & 0 & 0 \end{pmatrix}$, & $\begin{pmatrix} 0 & -1 & 0 \\ 1 & 0 & 0\\ 0 & 0 & 0 \end{pmatrix}$, & $-i \begin{pmatrix} 0 & 1 & 0 \\ 1 & 0 & 0 \\ 0 & 0 & 0 \end{pmatrix}$.
\end{tabular}
\end{center}
Any $\mf{t}^2$-invariant inner product on $\mf p$ induces a Riemannian metric on $F_{1,2}$.  Consider the inner product on $\mf p$ for which $\nu R_1, \ldots, \nu R_6$ is orthonormal for any $\nu \in \R_+$. There is a unique value of $\nu_1$ for which the volume of $F_{1,2}$ with respect to the induced Riemannian metric is $1$. Denote by $E_1, \ldots, E_6$ the basis $\nu_1 R_1, \ldots, \nu_1 R_6$ and by $\bar g$ the corresponding inner product on $\mf p$. Denote by $e^1, \ldots , e^6$ the dual basis of $E_1, \ldots, E_6$. Computing the linearized isotropy action with respect to the basis $(E_1, \ldots, E_6)$ yields
\[
  \alpha_*(X_1) =
  \begin{pmatrix}
    0 & -1 &  0 & 0 & 0 &  0 \\
    1 &  0 &  0 & 0 & 0 &  0 \\
    0 &  0 &  0 & 2 & 0 &  0 \\
    0 &  0 & -2 & 0 & 0 &  0 \\
    0 &  0 &  0 & 0 & 0 & -1 \\
    0 &  0 &  0 & 0 & 1 &  0
  \end{pmatrix},
  \quad
  \alpha_*(X_2) =
  \begin{pmatrix}
    0 & -2 &  0 & 0 &  0 & 0 \\
    2 &  0 &  0 & 0 &  0 & 0 \\
    0 &  0 &  0 & 1 &  0 & 0 \\
    0 &  0 & -1 & 0 &  0 & 0 \\
    0 &  0 &  0 & 0 &  0 & 1 \\
    0 &  0 &  0 & 0 & -1 & 0
  \end{pmatrix}.
\]

The space of $\mathfrak{t}^2$ invariant bilinear forms $\left(\odot^2 \mathfrak{p}\right)^{\mathfrak t^2}$ is given by the span of
$$\alpha_1 = e_1^2 + e_2^2, \quad \alpha_2 =  e_3^2 + e_4^2, \quad \alpha_3 =  e_5^2 + e_6^2,$$
as has been computed in \cite{cs}, section 5.
Hence the space of invariant metrics is given by
$$\left(\odot^2_+ \mathfrak{p} \right)^{\mathfrak t^2} = \{a_1 \alpha_1 + a_2 \alpha_2 + a_3 \alpha_3 : a_i \in (0, \infty) \}.$$
The metric $g = a_1 \alpha_1 + a_2 \alpha_2 + a_3 \alpha_3$ can be written as
$$g(v,w) = \bar g(A^{-1} v, A^{-1} w)$$
with $A^{-1} = \diag ( \sqrt{a_1}, \sqrt{a_1}, \sqrt{a_2}, \sqrt{a_2}, \sqrt{a_3}, \sqrt{a_3})$. Notice that thus $A^{-1} \alpha_{*} A = \alpha_{*}$ and hence the representation $\rho_n \circ \tilde A^{-1} \tilde \alpha \tilde A$ is independent of the choice of metric. Hence the module of invariant spinors $\Sigma_n^{T^2}$ does not depend on the choice of metric.

It is well known that the spinor module $\Sigma_6$ can be written as
$$\Sigma_6 \cong \C^2 \otimes \C^2 \otimes \C^2.$$
We identify the standard basis of the tensor product with the standard basis of $\C^8$ via
$$e_i \otimes e_j \otimes e_k \leftrightarrow e_{1 + i + 2j + 4k},$$
where the standard basis of $\C^2$ is labelled $e_0, e_1$. The Clifford action of the standard vectors $e_1, \ldots, e_6 \in \R^6$ is then given by
\[
  g_1 \otimes T \otimes T, \;  g_2 \otimes T \otimes T, \quad
  E \otimes g_1 \otimes T, \;  E \otimes g_2 \otimes T, \quad
  E \otimes E \otimes g_1, \;  E \otimes E \otimes g_2,
\]
where
\[
  E = \begin{pmatrix} 1 & 0 \\ 0 & 1 \end{pmatrix}, \quad T = \begin{pmatrix} 0 & -i \\ i & 0 \end{pmatrix}, \quad
  g_1 = \begin{pmatrix} i & 0 \\ 0 & -i\end{pmatrix}, \quad g_2 = \begin{pmatrix} 0 & i \\ i & 0 \end{pmatrix}.
\]
For details on this construction, see \cite{fri}, section 1.3. Lifting the action of $\mf{t}^2$ from $\so(\mf{p})$ to $\spin(\mf{p})$ via the canonical isomorphism
$$\spin(n) \to \so(n)$$
$$e_i \cdot e_j \cdot \mapsto 2 E_{ij},$$
we obtain an explicit representation of the infinitesimal action of $\mf{t}^2$ on $\C^8$. It is then a simple matter to calculate the space of invariant spinors and we obtain the two-dimensional space
$$\Sigma_6^{\mathfrak{t}^2} = \operatorname{span} \{ \varphi_1, \varphi_2 \}$$
where
$$\varphi_1 = \frac{1}{2} (1, 0, 0, -1, 0, -1, -1, 0) \text{ and } \varphi_2 =  \frac{1}{2} (0, 1, 1, 0, 1, 0, 0, -1).$$
Note that $\varphi_1$ and $\varphi_2$ have unit length.

Now suppose that $\Phi$ in $\mc{F}^G$ is given by $g = a_1 \alpha_1 + a_2 \alpha_2 + a_3 \alpha_3$ and $\varphi = \mu_1 \varphi_1 + \mu_2 \varphi_2$ with $|\mu_1|^2 + |\mu_2|^2 = 1$. Evaluating the formula \ref{spin_energy_homog} yields
$$\mc{E}(g, \varphi) = \frac{3}{16} \left( a_1^2 + a_2^2 + a_3^2 \right) - \frac{1}{8} \left( a_2 a_3 + a_1 a_2 + a_1 a_3 \right).$$
(The formidable number of computations necessary to evaluate the formula have been performed by a computer algebra system.) The energy does not depend on the choice of spinor. This is explained by the fact that the two invariant spinors $\varphi_1$ and $\varphi_2$ differ only by the action of the complex volume element, i.e.\@ $\varphi_2 = \omega_{\C} \cdot \varphi_1$. Thus we can once more consider $\mc{E}$ to be a map on the space of invariant metrics alone. The scaling law $\mc{E}( \lambda^2 g) = \lambda^4 \mc{E}(g)$ implies that $\mc{E}$ has no critical points on the positive cone $\R^+ \langle \alpha_1, \alpha_2, \alpha_3 \rangle$. The volume constraint $\int_M \vol_g = \int_M \vol_{\bar g}$ is equivalent to the  condition $a_1 a_2 a_3 = 1$. By the methods of Lagrange multipliers one can show that the only critical point of $\mc{E}$ under this volume constraint is given by $a_1 = a_2 = a_3$. This critical point corresponds to the strictly nearly K\"ahler metric on $F_{1,2}$ with the Killing spinor.

To derive the spinor flow equations, we compute the negative gradient. Because the energy is independent of the spinorial component we again have $Q_2 = 0$. The metric component $Q_1$ is of the form $v_1 \alpha_1 + v_2 \alpha_2 + v_3 \alpha_3$. By calculations similar to the ones in the case of Lie groups we obtain
$$v_i = -\frac{a_i^2}{2 a_1 a_2 a_3} \frac{\partial \mc{E}}{\partial a_i}.$$
Choosing any $\{i, j, k\} = \{1,2,3\}$, this results in
$$v_i = -\frac{1}{16} \frac{a_i}{a_j a_k} \left( 3 a_i -  a_j -  a_k \right).$$
Thus the homogeneous spinor flow is given by the system
\begin{align*}
  \dot{a}_1 & = -\frac{1}{16} \left( \frac{3 a_1^2}{a_2 a_3} - \frac{ a_1}{a_3} - \frac{a_1}{a_2} \right) \\ 
  \dot{a}_2 & = -\frac{1}{16} \left( \frac{3 a_2^2}{a_1 a_3} - \frac{a_2}{a_1} - \frac{a_2}{a_3} \right) \\ 
  \dot{a}_3 & = -\frac{1}{16} \left( \frac{3 a_3^2}{a_1 a_2} - \frac{a_3}{a_1} - \frac{2 a_3}{a_2} \right).
\end{align*}
For the volume normalized spinor flow we compute $\tilde{Q}_1(g, \varphi) = \tilde{v}_1 \alpha_1 + \tilde{v}_2 \alpha_2 + \tilde{v}_3 \alpha_3$ using formula \ref{eq:tildeQ}:
$$\tilde{v}_i = v_i + \frac{1}{a_1 a_2 a_3} \left(\frac{1}{16} \left( a_1^2 + a_2^2 + a_3^2 \right) - \frac{1}{24} \left( a_2 a_3 + a_1 a_2 + a_1 a_3 \right)\right) a_i.$$
After simplification, the volume normalized homogeneos spinor flow is given by the system
\begin{align*}
  \dot{a}_1 & = \frac{1}{48} \left(-2 - 6 \frac{a_1^2}{a_2a_3}  + \frac{a_1(a_2 + a_3)}{a_2 a_3} + 3 \frac{a_2^2 + a_3^2}{a_2 a_3} \right) \\ 
  \dot{a}_2 & = \frac{1}{48} \left(-2 - 6 \frac{a_2^2}{a_1a_3}  + \frac{a_2(a_1 + a_3)}{a_1 a_3} + 3 \frac{a_1^2 + a_3^2}{a_1 a_3} \right) \\ 
  \dot{a}_3 & = \frac{1}{48} \left(-2 - 6 \frac{a_3^2}{a_1a_2}  + \frac{a_3(a_1 + a_2)}{a_1 a_2} + 3 \frac{a_1^2 + a_2^2}{a_1 a_2} \right).
\end{align*}
Linearizing this system at the critical point $(1,1,1)$ yields the linear system
\[
  \dot{x} = \frac{1}{48} \begin{pmatrix} -10 & 5 & 5 \\ 5 & -10 & 5 \\ 5 & 5 & -10 \end{pmatrix} x.
\]
The eigenvalues of the defining matrix are $0$ and $-5/16$. The eigenvalue $0$ is simple and corresponds to the volume normalization. Since the other eigenvalues are negative, we conclude that the strictly nearly K\"ahler metric on $F_{1,2}$ is a stable critical point of the volume normalized homogeneous spinor flow.

The global dynamics of the volume normalized homogeneous spinor flow can be understood by computing the restriction of $\mc{E}$ to the set of invariant metrics with volume $1$. Such a metric is defined by $g = a_1 \alpha_1 + a_2 \alpha_2 + a_3 \alpha_3$, where $a_1 a_2 a_3 = 1$ and $a_1, a_2, a_3 > 0$. We can parametrize this set by $u,v > 0$ by taking $a_1 = u, a_2 = v$ and $a_3 = \frac{1}{uv}$. The spinorial energy functional then becomes a function on $\R_+^2$ and it is given by
$$\mc{E}(g) = \frac{3}{16} \left( u^2 + v^2 + \frac{1}{u^2 v^2} \right) - \frac{1}{8} \left( u v + \frac{1}{u} + \frac{1}{v} \right).$$
The only critical point on $\R_+^2$ is $u = v = 1$, corresponding to the critical point $a_1 = a_2 = a_3 = 1$ we found before. In fact, this critical point is a global minimum, since $\mc{E}$ diverges as $u$ or $v$ get close to $0$ or $\infty$. This implies that the volume normalized homogeneous spinor flow will converge towards the strictly nearly K\"ahler $F_{1,2}$ from any initial condition.

\begin{remark}
In all situations encountered so far the spinorial energy turned out to be independent of the spinorial part of a $G$-invariant universal spinor field, i.e.\ depended on the metric only. One could be tempted to believe that this is a general feature of the homogeneous spinor flow. However, this turns out not to be the case. For the Aloff-Wallach spaces
\[
N_{k,l}= \SU(3)/\mathrm{U}(1)_{k,l}
\] 
where 
\[
\mathrm{U}(1)_{k,l}= \Biggl\{ \begin{pmatrix} e^{ik\theta} & 0 & 0\\0 &  e^{il\theta} &0 \\ 0&0 & e^{-i(k+l)\theta} \end{pmatrix} : \theta \in \R \Biggr\}
\]
we found a true dependence of the energy on the spinorial part. In fact, by work of Reidegeld \cite{reide}, there is a 4-dimensional space of invariant metrics and we found a 2-dimensional space of invariant spinors, for generic values of $k$ and $l$. Unfortunately, we weren't able to further analyze the flow equations so we refrain from reproducing them here. 

\end{remark}
\medskip\noindent
{{\bf Acknowledgements.} The first author was partly supported by a \emph{For\-schungs\-stipendium} (FR 3473/2-1) from the Deutsche Forschungsgemeinschaft (DFG).

\end{document}